\documentclass[12pt]{article}

\usepackage[T1]{fontenc}
\usepackage{lmodern}
\usepackage{amsmath,amssymb,amsfonts,amsthm,mathtools}
\usepackage{enumitem}
\usepackage{needspace}
\usepackage{tikz}
\usepackage{float}

\usepackage[pdfauthor={Chunqiu Fang and Rongxing Xu},
  pdftitle={On the minimum number of triangles in balanced tripartite graphs with large minimum degree},
  pdfsubject={Extremal graph theory},
  pdfstartview=XYZ,bookmarks=true,colorlinks=true,
  linkcolor=blue,urlcolor=blue,citecolor=blue,
  linktocpage=true,hyperindex=true]{hyperref}

\allowdisplaybreaks
\numberwithin{equation}{section}
\setlist[enumerate]{label=(\arabic*),leftmargin=2.35em,itemsep=0.2em,topsep=0.3em}
\setlist[itemize]{leftmargin=2em,itemsep=0.2em,topsep=0.3em}

\newtheorem{theorem}{Theorem}[section]
\newtheorem{conjecture}[theorem]{Conjecture}

\newtheorem{lemma}[theorem]{Lemma}
\newtheorem{claim}{Claim}[section]

\title{\large{\bfseries On the minimum number of triangles in balanced tripartite graphs with large minimum degree}}
\author{
  Chunqiu Fang \thanks{School of Computer Science and Technology, Dongguan University of Technology, Dongguan, Guangdong, 523808, China. Email: \texttt{fcq15@tsinghua.org.cn}.}
  \quad
  Rongxing Xu\thanks{School of Mathematical Sciences, Zhejiang Normal University, Jinhua, Zhejiang, 321000, China. Email: \texttt{xurongxing@zjnu.edu.cn}.}
}
\date{}

\begin{document}
\maketitle

\begin{abstract}
Let $f(n,t)$ be the minimum number of triangles in a tripartite graph with $n$ vertices in each part and minimum degree at least $n+t$. In 1975, Bollob\'{a}s, Erd\H{o}s and Szemer\'{e}di proved that $f(n,1)=\min\{4,n\}$. They further remarked that it is ``very likely'' that $f(n,t)\ge4t^3$ for $n\ge5t$. They also proved that $f(n,t) \ge t^3$ for all integers $n \ge t \ge 1$. We construct graphs showing that, for all integers $t\ge1$ and $n\ge3t+2\lceil(1+\sqrt5)t/2\rceil$,
\[
f(n,t)\le(1+\sqrt5)t^3+\left(1+\frac1{\sqrt5}\right)t^2.
\]
Here $1+\sqrt5\approx3.236<4$, and the displayed upper bound is strictly less than $4t^3$ for every $t\ge2$, disproving their proposed bound.
We also improve their lower bound $t^3$ by showing that $f(n,t)\ge\frac{12}{5}t^3$ for all integers $t\ge2$ and $n\ge18t^6$.
\end{abstract}

\section{Introduction}

Let $\mathcal{G}_3(n)$ denote the family of tripartite graphs with $n$ vertices in each part. Triangle counts have been studied under various restrictions on graphs in this family. For example, Fischer and Matou\v{s}ek~\cite{FM01} posed the problem of maximizing the number of triangles when the bipartite graph induced by every pair of parts is $C_4$-free; see~\cite{CMT18,FX26} for subsequent progress. Bondy et al.~\cite{BSTT06} studied edge-density conditions between pairs of parts that force a triangle, and Baber, Johnson and Talbot~\cite{BJT10} subsequently studied the minimum triangle density under prescribed edge densities.

Bollob\'{a}s, Erd\H{o}s and Szemer\'{e}di~\cite{BES75} studied complete subgraphs in multipartite graphs with parts of equal size under minimum-degree conditions. In the tripartite case, every $G\in\mathcal{G}_3(n)$ with $\delta(G)>n$ contains a triangle. This threshold is sharp: joining one part completely to the other two and leaving no edges between those two parts gives a triangle-free graph of minimum degree $n$. Let $\tau(G)$ denote the number of triangles in $G$. For integers $1\le t\le n$, their counting problem is to determine
\[
f(n,t)=\min\{\tau(G):G\in\mathcal{G}_3(n),\ \delta(G)\ge n+t\}.
\]

Bollob\'{a}s, Erd\H{o}s and Szemer\'{e}di~\cite{BES75} proved $f(n,1)=\min\{4,n\}$. For $n\ge5t$, they constructed graphs with exactly $4t^3$ triangles. They suggested the following conjecture, which also appears as a question in the UCSD Erd\H{o}s problems collection.\footnote{In~\cite[p.~101]{BES75}, the authors wrote: ``It is very likely that every graph $G_3(n)$, $n\ge5t$, with minimal degree $n+t$ contains at least $4t^3$ triangles''. See also item~9 in the ``Random graphs and graph enumeration'' section of the \href{https://mathweb.ucsd.edu/~erdosproblems/erdos/}{collection} and the \href{https://mathweb.ucsd.edu/~erdosproblems/erdos/newproblems/TrianglesInMultipartiteGraph.html}{individual problem page}.}

\begin{conjecture}[Bollob\'{a}s, Erd\H{o}s and Szemer\'{e}di~\cite{BES75}]\label{conj:BES}
For all positive integers $n$ and $t$ with $n\ge5t$, $f(n,t)\ge4t^3$.
\end{conjecture}

Bollob\'{a}s, Erd\H{o}s and Szemer\'{e}di~\cite{BES75} proved the following lower bound.

\begin{theorem}[\cite{BES75}]\label{thm:BES}
For all integers $1\le t\le n$, $f(n,t)\ge t^3$.
\end{theorem}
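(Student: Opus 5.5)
We prove $f(n,t)\ge t^3$ by counting triangles through edges. Let $G\in\mathcal{G}_3(n)$ have parts $V_1,V_2,V_3$ and $\delta(G)\ge n+t$. For $v\in V_i$ and $j\ne i$ write $d_j(v)=|N(v)\cap V_j|$, so $d_j(v)+d_k(v)\ge n+t$ where $\{i,j,k\}=\{1,2,3\}$. Since $d_k(v)\le n$, every vertex has at least $t$ neighbours in each of the other two parts. For an edge $uv$ with $u\in V_i$, $v\in V_j$, the number of triangles through $uv$ is $|N(u)\cap N(v)\cap V_k|\ge d_k(u)+d_k(v)-n$.

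\emph{Step 1: one triangle-rich edge.} For any edge $uv$, $d_k(u)\ge n+t-d_j(u)$ and $d_k(v)\ge n+t-d_i(v)$, so $uv$ lies in at least $n+2t-d_j(u)-d_i(v)$ triangles. To make this large we want $d_j(u)$ and $d_i(v)$ small. Choose $u\in V_1$ minimising $d_2(u)$, and then choose $v\in N(u)\cap V_2$ minimising $d_1(v)$. This bounds the degrees of one pair of endpoints, but not enough to guarantee many triangles through a single edge, so we will instead sum over many edges.

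\emph{Step 2 (main route): pick a vertex and count.} Fix any $w\in V_3$ and let $A=N(w)\cap V_1$ and $B=N(w)\cap V_2$, so $|A|+|B|\ge n+t$. Every edge between $A$ and $B$ gives a triangle containing $w$. Each $a\in A$ satisfies $d_2(a)\ge n+t-d_3(a)\ge t$. Moreover $a$ misses at most $n-|B|$ vertices of $V_2$, so
\[
e(a,B)\ge d_2(a)-(n-|B|)\ge |B|+t-d_3(a).
\]
This bound is weak when $d_3(a)$ is large. We therefore avoid the crude estimate and double count the triangles $xyz$ weighted by all three cross pairs. Summing $e(A,B)$ over all $w\in V_3$ counts each triangle once, so
\[
\tau(G)=\sum_{w\in V_3} e\bigl(N_1(w),N_2(w)\bigr)\ge \sum_{w\in V_3}\sum_{a\in N_1(w)}\bigl(d_2(a)+d_2(w)-n\bigr)_+ .
\]
Here we use that $N_2(a)$ and $N_2(w)$ are both subsets of $V_2$, of sizes $d_2(a)$ and $d_2(w)$.

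\emph{Step 3: a local argument for the cubic bound.} To get the $t^3$ bound we pick an extremal configuration. Take $x\in V_1$ and $y\in V_2$ adjacent, chosen so that $d_3(x)+d_3(y)$ is maximised among edges between $V_1$ and $V_2$. Let $C=N(x)\cap N(y)\cap V_3$. Then $|C|\ge d_3(x)+d_3(y)-n$, and we aim to show $|C|\ge t$. For each $z\in C$, the edges $xz$ and $yz$ lie in triangles with $N(x)\cap N(z)\cap V_2$ and $N(y)\cap N(z)\cap V_1$. We then look for $t$ vertices $z$, each with $t$ neighbours $y'$ such that $xy'z$ or a similar configuration spans about $t$ triangles. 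Combining these gives $t\cdot t\cdot t$ distinct triangles.

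\emph{Main obstacle.} The hard part is guaranteeing that some edge between $V_1$ and $V_2$ has $d_3(x)+d_3(y)\ge n+t$ and that the resulting triangles are distinct. I would first try an averaging argument. Since $\sum_{v}(d_i+d_j)\ge 3n(n+t)$ summed over all vertices, some part pair carries edges whose endpoints' degrees toward the third part sum to at least $n+t$ on average (weighted by edges). If this fails, the degrees in $G$ are very unbalanced, and we handle that case by a direct count through a vertex of low degree into one part.

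Distinctness should come from fixing $x$ and counting pairs $(z,y')$ with $z\in C$ and $y'\in N(x)\cap N(z)\cap V_2$. This gives $\sum_{z\in C}(d_2(x)+d_2(z)-n)\ge t\cdot t$ triangles through $x$. We then vary $x$ over $t$ choices in $N(y)\cap V_1$ with the same property, so that each triangle is counted at most once per $x$, yielding $t^3$.
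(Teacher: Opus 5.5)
Your proposal does not yet prove the statement. Steps~1 and~2 are correct but give no bound, and Step~3, where the factor $t^3$ has to come from, rests on three unproved assertions.

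(i) $|C|\ge t$ needs an edge with $d_3(x)+d_3(y)\ge n+t$, and the averaging you suggest fails. In the $p=t$ construction of Section~\ref{sec:construction}, the edge-weighted average of the endpoints' degrees into the third part is $n-t+O(t^2/n)<n+t$ for every pair of parts. This piece is repairable. Orient $V_1\to V_2\to V_3\to V_1$ and use the degree condition only at the tail; this gives $\tau(uv)\ge t+d^+(v)-d^+(u)$. That bound telescopes around any directed cycle, so some edge lies in at least $t$ triangles, as in Claim~\ref{clm:degree-spread}.

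(ii) The second level is where the argument genuinely breaks. The bound $\sum_{z\in C}\bigl(d_2(x)+d_2(z)-n\bigr)\ge t\cdot t$ requires $d_2(x)+d_2(z)\ge n+t$ for every $z\in C$. The hypothesis only gives $d_2(x)\ge n+t-d_3(x)$, and you chose $x$ precisely to make $d_3(x)$ large, which is what allows $d_2(x)=t$. In that case a term is at least $t$ only if $z$ is complete to $V_2$.

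(iii) The existence of $t$ vertices $x\in N(y)\cap V_1$ ``with the same property'' is asserted with no argument, since the maximising choice controls a single edge. Distinctness is not the issue: triangles through distinct vertices of $V_1$ are automatically distinct. The obstruction is that the hypothesis controls only the \emph{sum} of a vertex's two cross-degrees. So any greedy edge-by-edge choice that makes one degree large weakens the bound at the next level.

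The missing idea is a global choice under which the error terms $d^+(v)-d^+(u)$ cancel in aggregate. The paper quotes this theorem from Bollob\'as, Erd\H{o}s and Szemer\'edi without proof. However, the proof of Lemma~\ref{lem:core} uses only $d(u)\ge|V_{i+1}|+t$, not the unbalancedness, so it applies verbatim to $G$ and yields $f(n,t)\ge t^3$.

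In that proof, with the cyclic orientation, $S_i$ is a set of $t$ vertices of $V_i$ with largest out-degrees. Take $u\in S_i$ and a nonneighbour $x\in S_{i-1}$. The degree condition supplies a distinct in-neighbour $m_x\in V_{i-1}\setminus S_{i-1}$ of $u$, and maximality of $S_{i-1}$ gives $d^+(m_x)\le d^+(x)$. Hence $um_x$ lies in at least $t+\bar d_{V_i}(x)-\bar d_{V_{i+1}}(u)$ triangles, and arcs inside $S$ are charged similarly. Adding the bounds for triangles with one and with two vertices in $S$, the $\bar d_{V_{i+1}}$ terms cancel. Let $c_r$ count the triples of $S_0\times S_1\times S_2$ missing exactly $r$ edges. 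Then $\tau_1+\tau_2+\tau_3\ge\tau_3+2c_1+3c_2+3c_3=t^3+c_1+2c_2+2c_3\ge t^3$. In words, every triangle missing among the $3t$ chosen vertices is compensated by triangles with one or two vertices outside $S$. This is the bookkeeping your Step~3 lacks.
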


Further progress on lower bounds was made by Chen et al.~\cite{CHLLMZ25}, who proved $f(n,t)\ge n^2(3t-n)/2$, with equality when $n$ is even and $t\ge n/2$.

In this paper, we disprove Conjecture~\ref{conj:BES} for every fixed $t\ge2$ and sufficiently large $n$. We also strengthen the lower bound in Theorem~\ref{thm:BES} in this range. Our first result is the following upper bound, obtained by an explicit construction.

\begin{theorem}\label{thm:upper}
Let $\varphi=(1+\sqrt5)/2$. For all integers $t\ge1$ and $n\ge3t+2\lceil\varphi t\rceil$,
\[
f(n,t)\le
2\min_{p\in\{\lfloor\varphi t\rfloor,\lceil\varphi t\rceil\}}
p\max\{t^2,(p+t)(2t-p)\}
\le(1+\sqrt5)t^3+\left(1+\frac1{\sqrt5}\right)t^2.
\]
\end{theorem}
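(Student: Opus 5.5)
The plan is to exhibit, for each $p\in\{\lfloor\varphi t\rfloor,\lceil\varphi t\rceil\}$, an explicit graph $G_p\in\mathcal{G}_3(n)$ with $\delta(G_p)\ge n+t$ and at most $2p\max\{t^2,(p+t)(2t-p)\}$ triangles. The inequality with $(1+\sqrt5)t^3+(1+1/\sqrt5)t^2$ is then a separate calculus estimate.

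I start from the triangle-free extremal graph: $A$ complete to $B\cup C$ and no edges between $B$ and $C$. Here vertices of $A$ have a surplus of $n-t$, while vertices of $B$ and $C$ each lack exactly $t$ degrees. The deficit is repaired through two symmetric \emph{hub} sets $S_B\subseteq B$ and $S_C\subseteq C$ with $|S_B|=|S_C|=p$.
\begin{itemize}
\item Each ordinary vertex of $B\setminus S_B$ is joined to $t$ vertices of $S_C$, with these choices spread so that every hub receives about $(n-p)t/p$ such edges. The ordinary vertices of $C\setminus S_C$ are treated the same way with $S_B$.
\item To stop these new $B$--$C$ edges from creating many triangles, each hub $c\in S_C$ is joined only to a small set $T_c\subseteq A$ of size about $2t-p$, not to all of $A$.
\item Ordinary vertices of $B$ and $C$ are made non-adjacent to the relevant small sets $T_c$, and the degree they lose there is compensated inside the hub structure.
\end{itemize}
Every triangle then contains a hub vertex. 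Since the configuration is symmetric, the count is $2p$ times the number of edges inside a single hub's neighbourhood. For $c\in S_C$ this neighbourhood consists of $T_c$ in $A$ together with about $p+t$ vertices of $B$ that must be complete to $T_c$. This gives $(p+t)(2t-p)$ triangles per hub, and at least $t^2$ whenever the degree requirement of the $A$-vertices in $T_c$ forces extra edges. This is the source of the term $\max\{t^2,(p+t)(2t-p)\}$.

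Degree verification comes next. Each vertex of $A\setminus\bigcup T_c$ loses at most $2p$ neighbours, so it has degree at least $2n-2p\ge n+t$. The hypothesis $n\ge3t+2\lceil\varphi t\rceil$ is exactly what guarantees this, along with enough ordinary vertices to distribute the $t$-edges to the hubs evenly. Hubs have degree about $n-p+(n-p)t/p+(2t-p)$, which is far above $n+t$. The ordinary vertices have degree exactly $n+t$ once the missing $A$-neighbours are replaced within the hub structure.

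For the analytic bound, I put $p=\varphi t+\epsilon$. Using $\varphi^2=\varphi+1$, one checks that $(\varphi t+t)(2t-\varphi t)=t^2$ exactly. Hence $g(p)=(p+t)(2t-p)$ satisfies $g(p)=t^2-\epsilon(\sqrt5\,t+\epsilon)$, and so $p\max\{t^2,g(p)\}\le pt^2$ when $\epsilon\ge0$. Choosing $p=\lceil\varphi t\rceil$ therefore gives $2pt^2\le 2\varphi t^3+2t^2$. The sharper constant $1+1/\sqrt5$ comes from a case analysis on the fractional part of $\varphi t$: if it is small, take $p=\lceil\varphi t\rceil$; otherwise take $p=\lfloor\varphi t\rfloor$, where $g(p)$ exceeds $t^2$ by at most about $\sqrt5\,t\cdot\{\varphi t\}$. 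Balancing the two cases at the threshold $\{\varphi t\}=\ldots$ gives the stated bound.

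The main obstacle is the exact wiring of the hub neighbourhoods. The ordinary vertices must reach degree exactly $n+t$ without any triangle avoiding the hubs, and without the sets $T_c$ creating triangles through ordinary $B$--$C$ edges. I would first try to make all hubs in $S_C$ share one common set $T\subseteq A$ of size $2t-p$, and make $T$ anticomplete to the ordinary vertices that send edges to hubs, compensating those vertices with one extra hub edge each. If the degree count does not close, I would let $|T|=t$ in the regime $p>\varphi t$, which is precisely where the $t^2$ branch of the maximum arises.
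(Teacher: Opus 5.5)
\paragraph{Verdict.} The construction in your proposal does not work, and the construction is what proves the first inequality. Your analytic part agrees with the paper. The identity $(\varphi t+t)(2t-\varphi t)=t^2$, the expansion of $(p+t)(2t-p)$ about $p=\varphi t$, and balancing $\varphi^2\varepsilon$ against $1-\varepsilon$ (with $\varepsilon=\varphi t-\lfloor\varphi t\rfloor$) at $\varepsilon=1/(1+\varphi^2)$ give exactly $(5+\sqrt5)/10$. You should still check that the lower-order terms $-(3\varphi-1)\varepsilon^2t+\varepsilon^3$ for $p=\lfloor\varphi t\rfloor$ are negative, rather than writing ``about''.

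\paragraph{Where the construction fails.} As described, the degree condition fails in several places.
\begin{itemize}
\item A hub $c\in S_C$ whose only neighbours in $A$ form $T_c$, with $|T_c|\approx 2t-p$, has degree at most $|B|+|T_c|=n+2t-p$. For $t\ge2$ both admissible values of $p$ exceed $t$, so this is below $n+t$ however $B$ is wired. Your claimed hub degree $n-p+(n-p)t/p+(2t-p)$ exceeds this ceiling, so it is a miscount.
\item An ordinary vertex of $B$ that loses $|T_c|\ge 2t-p$ neighbours in $A$ and gains $t$ hub neighbours has degree $n+p-t<n+t$ (for $p<2t$). The promised ``compensation'' is never specified.
\item You ask for about $p+t$ vertices of $B$ complete to $T_c$. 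But ordinary $B$-vertices are anticomplete to $T_c$, and $|S_B|=p$.
\item With hubs in both $B$ and $C$, a triangle can contain two hubs. So ``$2p$ times the edges in one hub's neighbourhood'' is not the triangle count.
\item Your fallback $|T_c|=t$ does not help. Each hub must then be complete to $B$, and an ordinary vertex of $B$ has at most $p$ neighbours in $C$ and at most $n-t$ in $A$. That is again below $n+t$ when $p<2t$.
\end{itemize}

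\paragraph{What the paper does instead.} The missing idea is the right allocation of sizes. In the paper, the $2p$ vertices whose neighbourhoods carry all triangles are $A_1,A_3\subseteq A$, both in the same part.
\begin{itemize}
\item $A_1$ is complete to $B$ and to a $t$-set $C_2$. Once it sees all of $B$, it needs exactly $t$ neighbours in $C$.
\item The trade-off lives in a bipartite graph $H$ between $C_2$ and a $(p+t)$-set $B_1$. Each vertex of $C_2$ is already complete to $A$ and needs $t$ neighbours in $B_1$, which gives $t^2$ edges.
\item Each vertex of $B_1$ sees only $A_1$ in $A$ and all of $C_1\cup C_3$ in $C$, so it needs $2t-p$ neighbours in $C_2$, which gives $(p+t)(2t-p)$ edges.
\item Hence $\tau(G)=2p|E(H)|$.
\end{itemize}

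\paragraph{Translating to your framing.} In your deficit-repair language, the corrections are:
\begin{itemize}
\item the sets receiving the new $B$--$C$ edges are $B_1$ and $C_1$, of size $p+t$ rather than $p$;
\item every ordinary vertex is joined to all of those on the opposite side, rather than to $t$ of them;
\item their restricted $A$-neighbourhoods, $A_1$ and $A_3$, have size $p$ rather than $2t-p$;
\item $2t-p$ appears only as the degree required from $B_1$ into $C_2$.
\end{itemize}
You found the right product $p(p+t)(2t-p)$ but attached its factors to the wrong sets. Without an explicit graph with checked degrees and an exact triangle count, the first inequality remains unproved.
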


Here $1+\sqrt5\approx3.236<4$. For every integer $t\ge2$, the upper bound in Theorem~\ref{thm:upper} is strictly less than $4t^3$, so Conjecture~\ref{conj:BES} fails.
Our second result improves the lower bound $t^3$ by a constant factor when $n$ is sufficiently large in terms of $t$.

\begin{theorem}\label{thm:lower}
For all integers $t\ge2$ and $n\ge18t^6$, $f(n,t)\ge\frac{12}{5}t^3$.
\end{theorem}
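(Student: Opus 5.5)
Let $G\in\mathcal{G}_3(n)$ have parts $V_1,V_2,V_3$ and $\delta(G)\ge n+t$, and let $t\ge2$, $n\ge 18t^6$. The plan is to count triangles through edges. For an edge $uv$ with $u\in V_i$, $v\in V_j$, the common neighbourhood of $u$ and $v$ in the third part $V_k$ satisfies
\[
|N(u)\cap N(v)\cap V_k|\ \ge\ d_k(u)+d_k(v)-n,
\]
where $d_k(x)=|N(x)\cap V_k|$. Since $d_j(u)+d_k(u)\ge n+t$ and $d_j(u)\le n$, we get $d_k(u)\ge t$ for every ordered pair of parts. We call an edge $uv$ \emph{rich} if $d_k(u)+d_k(v)>n$. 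Triangles arise exactly from rich edges.

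\textbf{Step 1: structural dichotomy for large $n$.} Call $u\in V_i$ \emph{$j$-heavy} if $d_j(u)\ge n-s$ for a threshold $s$ (roughly $t^2$ or $t^3$, chosen using $n\ge18t^6$). The aim is to show that, unless $G$ already has $\Omega(n)\gg t^3$ triangles, almost all vertices have a very lopsided degree profile. Concretely, $G$ should be close to the extremal triangle-free configuration: one part $V_1$ essentially complete to $V_2\cup V_3$, and sparse between $V_2$ and $V_3$. The argument is averaging. If many vertices had balanced profiles, say both $d_j,d_k\ge t+s$, then the sum $\sum_{uv}(d_k(u)+d_k(v)-n)^+$ would be linear in $n$, giving more than $4t^3$ triangles. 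This is where the bound $n\ge18t^6$ should be consumed.

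\textbf{Step 2: local counting near the extremal structure.} In the near-extremal structure, let $A\subseteq V_2$ and $B\subseteq V_3$ be the vertices with extra neighbours across $V_2$–$V_3$. Each $x\in V_2$ has $d_3(x)\ge t$, and the same holds for $V_3$. Every $V_2V_3$-edge $xy$ yields at least
\[
d_1(x)+d_1(y)-n\ \ge\ n+2t-d_3(x)-d_2(y)
\]
triangles through $V_1$. On the other hand, vertices of $V_1$ have total degree $n+t$, so a vertex of $V_1$ that is adjacent to nearly all of $V_2\cup V_3$ forces deficits elsewhere.

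We would parametrize by the sizes of the low-degree sets: vertices of $V_1$ missing neighbours in $V_2$ or $V_3$, and vertices of $V_2$ or $V_3$ whose $V_1$-degree is below $n$. This reduces the problem to a weighted optimisation that mirrors the construction in Theorem~\ref{thm:upper}, where $p\approx\varphi t$ vertices have a special role. Note that vertices in $V_2$ with $d_3(x)=t+a$ lose at least $a$ from $d_1$. Summing triangles over the bipartite graph between $V_2$ and $V_3$ then gives a lower bound of the form
\[
\sum_{xy\in E(V_2,V_3)}\bigl(2t-a_x-b_y\bigr)^+ \;+\; \text{(triangles through the deficit vertices of } V_1).
\]

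\textbf{Step 3: solve the resulting discrete optimisation.} We expect it to reduce to minimizing an expression like
\[
p\,\max\{t^2,(p+t)(2t-p)\}\quad\text{versus alternative configurations},
\]
and to relax this to a lower bound of $\tfrac{12}{5}t^3$. The constant $12/5$ presumably comes from a cruder convexity or Cauchy–Schwarz step that does not achieve the optimal $1+\sqrt5$. For example, one could use $|N(x)\cap N(y)\cap V_1|\ge 2t-a-b$ summed over at least $t$ neighbours each, and then optimize a quadratic in one parameter. Evaluating at the optimum should give $12/5$.

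The main obstacle is Step 1: rigorously establishing the near-extremal structure with explicit error terms that are absorbed when $n\ge18t^6$. We would handle it first by a cleaning argument. We delete vertices lying in more than $t^2$ triangles, showing that only $O(t)$ of them exist when $\tau(G)<\tfrac{12}{5}t^3$. We then apply the edge-counting inequality to the cleaned graph.
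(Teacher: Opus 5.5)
There is a genuine gap. The proposal is a plan whose load-bearing step is never carried out, and Step~1 is false as you formulate it (with the threshold ``$\Omega(n)$ triangles'').

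\emph{Step~1.} Balanced degree profiles do not force many triangles. The graph of Section~\ref{sec:construction} has $O(t^3)$ triangles, yet all $n-2p$ vertices of $A_2$ satisfy $d_B=d_C=n-p-t$. Even in your own target configuration, every vertex of $V_1$ is balanced.

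Nor does $\tau(G)=o(n)$ force $G$ to be close to ``one part complete to the other two''. Let $V_1$ be complete to $V_2$, and split $V_3=C'\cup C''$ with $C'$ complete to $V_1$, $C''$ complete to $V_2$, and $|C'|\approx|C''|\approx n/2$. This graph is triangle-free with minimum degree $n$. Attach the two gadgets of Section~\ref{sec:construction}: one has its $(p+t)$-set in $V_2$, joined to $C'$; the other has its $(p+t)$-set in $V_1$, joined to $C''$. This yields graphs with $\delta\ge n+t$ and exactly $2p|E(H)|$ triangles. They are $\Omega(n^2)$ edges away from every configuration of your type, so no stability statement of that form can hold.

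\emph{Steps~2--3.} These simply assume that $G$ looks like the extremal construction, which is the whole difficulty. The optimisation you write down has value $\varphi t^3$ per gadget, so it reproduces the upper bound $(1+\sqrt5)t^3$. No step of the proposal actually produces $\frac{12}{5}t^3$.

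\emph{Cleaning.} Fewer than $\frac{36}{5}t$ vertices lie in more than $t^2$ triangles. But deleting them can lower the degree of a remaining vertex by up to $\frac{36}{5}t>t$. That destroys the surplus over $n$ on which every bound of the form $d_k(u)+d_k(v)-n$ depends.

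The missing ideas are a decomposition that genuinely consumes $n\ge 18t^6$, and the source of $\frac{12}{5}=2\cdot\frac65$.

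\emph{Decomposition.} Label the parts $V_0,V_1,V_2$ and orient $V_0\to V_1\to V_2\to V_0$. Every triangle becomes a directed triangle inside a single strong component. Suppose no initial segment of an acyclic ordering of the strong components is $t$-unbalanced. Then every component $D_r$ is nontrivial, its out-degrees spread by at most $2\tau(D_r)-t$, and hence $|D_r|\le 9\tau(D_r)^2+6\tau(D_r)-6$. Summing over components gives $3n\le 9\tau(G)^2+6\tau(G)-6$, which is impossible when $\tau(G)<\frac{12}{5}t^3$ and $n\ge 18t^6$.

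\emph{Splitting.} No arc enters an initial segment, so a $t$-unbalanced initial segment splits $G$ into two vertex-disjoint induced subgraphs. Each has $t$-unbalanced parts and satisfies a one-sided condition: $d_{G_1}(u)\ge |V_{i+1}^1|+t$ in one, and $d_{G_2}(u)\ge |V_{i-1}^2|+t$ in the other.

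\emph{Induction on $t$.} Each piece is shown to contain at least $\frac65t^3$ triangles. Take the $t$ vertices of largest out-degree in each part. They meet at least $t^3$ triangles plus a nonedge correction. A further deletion, chosen by a volume argument, leaves a graph satisfying the same hypotheses with $t'=\lceil\eta t\rceil<t$, and induction gives the contradiction.

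Nothing in your plan plays the role of this splitting or of the controlled-deletion induction.
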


The explicit threshold $n \ge 18t^6$ in the lower bound is not optimized. More involved estimates can slightly improve the lower bound, but we retain $12/5$ here to keep the proof short and accessible.

In Section~\ref{sec:construction}, we give the construction proving Theorem~\ref{thm:upper}. To prove Theorem~\ref{thm:lower}, we partition the graph into two vertex-disjoint induced subgraphs in Section~\ref{sec:decomposition} and apply a local triangle estimate in Section~\ref{sec:local}.

We conclude this section with some notation and conventions. All graphs are finite and simple. We write $d_U(v)$ (or $d_H(v,U)$ when the graph needs to be specified) for the number of neighbors of $v$ in $U$, $e_H(U,W)$ for the number of edges between disjoint subsets $U,W$, and $\tau_H(e)$ for the number of triangles containing an edge $e$. Indices of three parts are always read modulo three.

\section{Proof of Theorem~\ref{thm:upper}}\label{sec:construction}

Let $t,p,n$ be positive integers such that $t\le p\le2t$ and $n\ge2p+3t$.

We first construct a bipartite graph $H=(X,Y)$ with parts $X=\{x_0,\ldots,x_{p+t-1}\}$ and $Y=\{y_0,\ldots,y_{t-1}\}$ by adding edges in two steps:
\begin{enumerate}
\item For each $0\le j<t$, join $y_j$ to $x_{jt},x_{jt+1},\ldots,x_{jt+t-1}$, with indices taken modulo $p+t$. Since $t\le p+t$, these neighbors are distinct. This step creates exactly $t^2$ edges and gives every vertex of $Y$ degree $t$. Before reduction modulo $p+t$, the subscripts run through $0,1,\ldots,t^2-1$, so every vertex of $X$ has degree $\lfloor t^2/(p+t)\rfloor$ or $\lceil t^2/(p+t)\rceil$.
\item For each vertex of $X$ whose degree is less than $2t-p$, add edges to its nonneighbors in $Y$ until its degree reaches $2t-p$. This is possible because $2t-p\le t=|Y|$.
\end{enumerate}

We now count the edges of $H$.
\begin{itemize}
\item If $t^2\ge(p+t)(2t-p)$, every vertex of $X$ already has degree at least $2t-p$ after the first step. Hence the second step adds no edges, and $|E(H)|=t^2$.

\item If $t^2<(p+t)(2t-p)$, every vertex of $X$ has degree at most $2t-p$ after the first step. Thus after the second step, every vertex of $X$ has degree exactly $2t-p$, and $|E(H)|=(p+t)(2t-p)$.
\end{itemize}

Consequently, $|E(H)|=\max\{t^2,(p+t)(2t-p)\}$. By construction, every vertex of $X$ has degree at least $2t-p$, and every vertex of $Y$ has degree at least $t$.

Let $A_1,A_2,A_3,B_1,B_2,B_3,C_1,C_2,C_3$ be pairwise disjoint sets with $|A_1|=|A_3|=p$, and
\[
|A_2|= n-2p, \quad 
|B_1|=|C_1|=p+t,\quad
|B_2|=|C_2|=t,\quad
|B_3|=|C_3|=n-p-2t.
\]
These sizes are positive by the assumption $n\ge2p+3t$. 

We construct $G\in\mathcal{G}_3(n)$ with parts $A=A_1\cup A_2\cup A_3$, $B=B_1\cup B_2\cup B_3$, and $C=C_1\cup C_2\cup C_3$. The edges are given as follows (see Fig.~\ref{fig:construction} for an illustration).\footnote{When $p=t$, we have $H=K_{t,2t}$ and recover a special case of the construction of Bollob\'{a}s, Erd\H{o}s and Szemer\'{e}di~\cite{BES75} (pp.~100--101). For $n \ge 5t$, the resulting graph has exactly $4t^3$ triangles.}
\begin{enumerate}
	\item Add all edges between $A_1$ and $B\cup C_2$, between $A_2$ and $B_2\cup B_3\cup C_2\cup C_3$, and between $A_3$ and $C\cup B_2$.
	\item Add all edges between $B_1$ and $C_1\cup C_3$, and between $B_3$ and $C_1$.
	\item Place a copy of $H$ between $B_1$ and $C_2$, with $B_1$ corresponding to $X$ and $C_2$ to $Y$. Also place a copy of $H$ between $C_1$ and $B_2$, with $C_1$ corresponding to $X$ and $B_2$ to $Y$.
\end{enumerate}

\begin{figure}[h]
\centering
\begin{tikzpicture}[
  x=1cm,y=1cm,
  every node/.style={font=\small},
  complete/.style={black,line width=1pt},
  copy H/.style={blue,line width=1.6pt},
  part frame/.style={draw=gray!65,line width=0.5pt},
  vertex set/.style={rectangle,draw=black,fill=white,line width=0.9pt,
    minimum height=0.60cm,inner sep=1.5pt}
]
\pgfmathsetmacro{\partangle}{45}
\coordinate (A1) at (-2.28,2.48);
\coordinate (A2) at (0,2.48);
\coordinate (A3) at (2.28,2.48);
\begin{scope}[shift={(-3,-0.8)},rotate=-\partangle]
  \coordinate (B1) at (1.92,0);
  \coordinate (B2) at (0.32,0);
  \coordinate (B3) at (-1.4,0);
  \draw[part frame] (-2.6,-0.76) rectangle (3,0.46);
\end{scope}
\begin{scope}[shift={(3,-0.8)},rotate=\partangle]
  \coordinate (C1) at (-1.92,0);
  \coordinate (C2) at (-0.32,0);
  \coordinate (C3) at (1.4,0);
  \draw[part frame] (-3,-0.76) rectangle (2.6,0.46);
\end{scope}
\draw[part frame] (-3,2.02) rectangle (3,3.24);

\foreach \v/\ang/\face/\targets in {
  A1/0/-0.3/{B3/-0.392,B2/-0.130666,B1/0.130666,C2/0.392},
  A2/0/-0.3/{B3/-0.98,B2/-0.326666,C2/0.326666,C3/0.98},
  A3/0/-0.3/{B2/-0.392,C1/-0.130666,C2/0.130666,C3/0.392},
  B1/-\partangle/0.3/{A1/-0.644,C3/-0.214666,C2/0.214666,C1/0.644},
  B2/-\partangle/0.3/{A1/-0.252,A2/-0.084,A3/0.084,C1/0.252},
  B3/-\partangle/0.3/{A1/-0.728,A2/0,C1/0.728},
  C1/\partangle/0.3/{B1/-0.644,B2/-0.214666,B3/0.214666,A3/0.644},
  C2/\partangle/0.3/{B1/-0.252,A1/-0.084,A2/0.084,A3/0.252},
  C3/\partangle/0.3/{B1/-0.728,A2/0,A3/0.728}}
{
  \begin{scope}[shift={(\v)},rotate=\ang]
    \foreach \target/\offset in \targets
      \coordinate (\v-\target) at (\offset,\face);
  \end{scope}
}

\foreach \u/\v in {
  A1/B1,A1/B2,A1/B3,A1/C2,
  A2/B2,A2/B3,A2/C2,A2/C3,
  A3/C1,A3/C2,A3/C3,A3/B2,
  B1/C1,B1/C3,B3/C1}
  \draw[complete] (\u-\v) -- (\v-\u);

\draw[copy H] (B1-C2) -- (C2-B1);
\draw[copy H] (C1-B2) -- (B2-C1);

\foreach \v/\lab/\ang/\len/\size/\side/\shade in {
  A1/A_1/0/1.12/p/0.544/cyan!30,
  A2/A_2/0/2.8/{n-2p}/0.544/white,
  A3/A_3/0/1.12/p/0.544/cyan!30,
  B1/B_1/-\partangle/1.84/{p+t}/-0.544/green!20,
  B2/B_2/-\partangle/0.72/t/-0.544/yellow!35,
  B3/B_3/-\partangle/2.08/{n-p-2t}/-0.544/magenta!20,
  C1/C_1/\partangle/1.84/{p+t}/-0.544/green!20,
  C2/C_2/\partangle/0.72/t/-0.544/yellow!35,
  C3/C_3/\partangle/2.08/{n-p-2t}/-0.544/magenta!20}
{
  \node[vertex set,fill=\shade,minimum width=\len cm,rotate=\ang] at (\v) {};
  \node[rotate=\ang,inner sep=0pt] at (\v) {$\lab$};
  \path (\v) ++({-\side*sin(\ang)},{\side*cos(\ang)})
    node[font=\footnotesize,rotate=\ang,inner sep=0pt] {$\size$};
}
\node[font=\large,anchor=west] at (3.16,2.63) {$A$};
\node[font=\large] at (-3.56,-1.768) {$B$};
\node[font=\large] at (3.56,-1.768) {$C$};
\end{tikzpicture}
\caption{The construction of $G$. Black lines indicate complete bipartite graphs, and blue lines indicate copies of $H$. Matching fills indicate equal sizes.}
\label{fig:construction}
\end{figure}

Since $n\ge2p+3t$, it is straightforward to check that every $v\in A\cup B_3\cup C_3$ satisfies $d_G(v)\ge n+t$, and we omit the verification.
By the constructions of $H$ and $G$, every vertex of $B_1$ has at least $2t-p$ neighbors in $C_2$, and every vertex of $C_1$ has at least $2t-p$ neighbors in $B_2$. Hence, for every $v\in B_1\cup C_1$,
\[
d_G(v)\ge p+(p+t)+(n-p-2t)+(2t-p)=n+t.
\]
Similarly, every vertex of $B_2$ has at least $t$ neighbors in $C_1$, and every vertex of $C_2$ has at least $t$ neighbors in $B_1$. Since these vertices are also adjacent to every vertex of $A$, every $v\in B_2\cup C_2$ satisfies $d_G(v)\ge |A|+t=n+t$. Thus $\delta(G)\ge n+t$.

It is easy to see that every triangle of $G$ lies in $G[A_1\cup B_1\cup C_2]$ or $G[A_3\cup B_2\cup C_1]$. Thus $\tau(G)=2p|E(H)|=2p\max\{t^2,(p+t)(2t-p)\}$.

To obtain the upper bounds in Theorem~\ref{thm:upper}, we now choose $p$ to make the triangle count in our construction as small as possible. The two expressions inside the maximum are equal at $p=\varphi t$, so we consider the two integer choices $p_1=\lfloor\varphi t\rfloor$ and $p_2=\lceil\varphi t\rceil$. Both values lie in $[t,2t]$. Since $n\ge3t+2p_2$ by hypothesis, the condition $n\ge2p+3t$ holds for either choice. Thus both choices give valid constructions, and taking the smaller of their triangle counts proves the first inequality.

To prove the second inequality, we estimate the error introduced by rounding $\varphi t$ to an integer. Let $\varepsilon=\varphi t-p_1$. Then $0<\varepsilon<1$, $p_1=\varphi t-\varepsilon$ and $p_2=\varphi t+1-\varepsilon$. Thus
\[
\begin{aligned}
p_1\max\{t^2,(p_1+t)(2t-p_1)\}
& = p_1(p_1+t)(2t-p_1) \\
& = \varphi t^3+\varphi^2\varepsilon t^2+(1-3\varphi)\varepsilon^2t+\varepsilon^3 \le\varphi t^3+\varphi^2\varepsilon t^2,
\end{aligned}
\]
because $\varepsilon^2(\varepsilon-(3\varphi-1)t)<0$.
Similarly,
\[
p_2\max\{t^2,(p_2+t)(2t-p_2)\}=p_2t^2=\varphi t^3+(1-\varepsilon)t^2.
\]
The rounding terms satisfy
\[
\min\{\varphi^2\varepsilon,1-\varepsilon\}
\le\frac{\varphi^2}{1+\varphi^2}
=\frac{5+\sqrt5}{10}.
\]
Taking the smaller of the two bounds above and multiplying by $2$, we obtain
\[
\begin{aligned}
2\min_{p\in\{p_1,p_2\}}p\max\{t^2,(p+t)(2t-p)\}
&\le2\varphi t^3+2\min\{\varphi^2\varepsilon,1-\varepsilon\}t^2\\
&\le2\varphi t^3+\frac{5+\sqrt5}{5}t^2 =(1+\sqrt5)t^3+\left(1+\frac1{\sqrt5}\right)t^2.
\end{aligned}
\]
This completes the proof of Theorem~\ref{thm:upper}.

\section{A partition lemma for the proof of Theorem~\ref{thm:lower}}\label{sec:decomposition}

In this section, we present a key partition lemma that will be used in the proof of Theorem~\ref{thm:lower}. Before stating it, we introduce the necessary concepts.

A directed graph $D$ is \emph{strongly connected} if every vertex can be reached from every other vertex by a directed path. A \emph{strong component} of $D$ is a maximal induced subdigraph of $D$ which is strongly connected.

We shall use the following fact about strong components.

\begin{lemma}[{\cite{BG00}, p.~17}]\label{lem:strong-order}
	The vertex sets of the strong components of any directed graph can be ordered as $D_1,\ldots,D_k$ for some $k$ so that every arc between distinct sets is directed from $D_i$ to $D_j$ with $i<j$.
\end{lemma}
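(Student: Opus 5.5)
The plan is to prove Lemma~\ref{lem:strong-order} by passing to the condensation of $D$ and showing that it is acyclic. First, we note that the strong components partition $V(D)$. Define $u\sim v$ if $u$ and $v$ are mutually reachable, counting each vertex as reachable from itself by the trivial path. This relation is reflexive and symmetric. It is transitive because concatenating a directed walk $u\to v$ with a directed walk $v\to w$ gives a directed walk from $u$ to $w$, and every directed walk contains a directed path. The equivalence classes of $\sim$ induce strongly connected subdigraphs, since any directed path between two vertices of a class passes only through vertices equivalent to them. Conversely, any strongly connected induced subdigraph lies inside a single class. Hence the classes are exactly the vertex sets of the strong components, and they partition $V(D)$.

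Next, we form the condensation $D^*$. Its vertices are the classes $D_1,\dots,D_k$, and there is an arc $D_i\to D_j$ with $i\ne j$ whenever some arc of $D$ goes from $D_i$ to $D_j$. The key claim is that $D^*$ has no directed cycle. Suppose instead that $D_{i_1}\to D_{i_2}\to\cdots\to D_{i_m}\to D_{i_1}$ is a directed cycle with $m\ge2$ distinct classes. Each $D^*$-arc is realized by an arc of $D$, and inside each class any two vertices are joined by directed paths in both directions. Splicing these together shows that a vertex of $D_{i_1}$ and a vertex of $D_{i_2}$ are mutually reachable. This contradicts their lying in different classes.

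Finally, we order the classes topologically. Every finite acyclic digraph has a vertex of in-degree $0$; otherwise, repeatedly stepping backwards along arcs would eventually revisit a vertex and produce a cycle. Pick such a class, label it $D_1$, delete it, and repeat on the remaining digraph, which is still acyclic. By induction this labels all classes $D_1,\dots,D_k$. Any arc of $D$ from $D_i$ to $D_j$ with $i\ne j$ gives a $D^*$-arc $D_i\to D_j$. When $D_j$ was labelled, it had in-degree $0$ among the classes not yet removed, so $D_i$ must already have been removed, which means $i<j$.

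No step here is a real obstacle; the only care needed is in the splicing argument for acyclicity. One must pass from walks to paths and use reflexivity so that single-vertex components are handled correctly.
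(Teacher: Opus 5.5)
Your proof is correct and is the standard argument: strong components are the mutual-reachability classes, the condensation is acyclic, and repeatedly removing an in-degree-$0$ class gives the ordering. The paper does not prove this lemma itself; it cites Bang-Jensen and Gutin, where it is proved in essentially this way, by showing that the strong component digraph is acyclic and that every acyclic digraph has an acyclic ordering.
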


We call such an ordering an \emph{acyclic ordering} of the strong components. For a fixed such ordering and an integer $s$ with $0\le s\le k$, the union $S=\bigcup_{r=1}^{s}D_r$ of the first $s$ component vertex sets is called an \emph{initial segment}. We take $S=\varnothing$ when $s=0$.

For a directed graph $F$, let $V(F)$ and $A(F)$ denote its vertex set and arc set, respectively. For $u\in V(F)$, let $N_F^+(u)$ and $N_F^-(u)$ denote the sets of its out-neighbors and in-neighbors, respectively. Its out-degree and in-degree are $d_F^+(u)=|N_F^+(u)|$ and $d_F^-(u)=|N_F^-(u)|$, respectively. An arc from $u$ to $v$ is written as $(u,v)$. We use $\vec P$ and $\vec C$ for directed paths and cycles, and $P$ and $C$ for their underlying undirected graphs, respectively.

Let $H$ be a tripartite graph, and let $(V_0,V_1,V_2)$ be a fixed ordering of its three parts. We write $H=(V_0,V_1,V_2)$ to record this order. Let $\vec{H}$ denote the directed graph obtained from $H$ by directing its edges from $V_0$ to $V_1$, from $V_1$ to $V_2$, and from $V_2$ to $V_0$. For each $i\in\{0,1,2\}$ and every $u\in V_i$, we have
\[
d_{\vec{H}}^+(u)=d_{V_{i+1}}(u),\quad d_{\vec{H}}^-(u)=d_{V_{i-1}}(u),\quad d_H(u)=d_{\vec{H}}^+(u)+d_{\vec{H}}^-(u).
\]
Every triangle of $H$ becomes a directed cycle in $\vec{H}$, so its three vertices lie in the same strong component.

For a real number $r>0$, a triple $(A_1,A_2,A_3)$ of finite sets is \emph{$r$-unbalanced} if $\max_{1\le i\le3}|A_i|-\min_{1\le i\le3}|A_i|\ge r$.

We now state the partition lemma.

\begin{lemma}\label{lem:prefix}
Let $t$ be a positive integer, and let $G=(V_0,V_1,V_2)\in\mathcal{G}_3(n)$ satisfy $\delta(G)\ge n+t$. If
$
n>3\tau(G)^2+2\tau(G)-2,
$
then each $V_i$ admits a partition $V_i=V_i^1\cup V_i^2$ such that
\begin{enumerate}[label=(C\arabic*),ref=(C\arabic*),leftmargin=3em]
\item\label{item:partition-nonempty} For all $i\in\{0,1,2\}$ and $j\in\{1,2\}$, $V_i^j\ne\varnothing$.
\item\label{item:partition-forward} For each $i\in\{0,1,2\}$ and every $u\in V_i^1$, $d_{G_1}(u)\ge |V_{i+1}^1|+t$, where $G_1=G[V_0^1\cup V_1^1\cup V_2^1]$.
\item\label{item:partition-backward} For each $i\in\{0,1,2\}$ and every $u\in V_i^2$, $d_{G_2}(u)\ge |V_{i-1}^2|+t$, where $G_2=G[V_0^2\cup V_1^2\cup V_2^2]$.
\item\label{item:partition-imbalance} For each $j\in\{1,2\}$, the triple $(V_0^j,V_1^j,V_2^j)$ is $t$-unbalanced.
\end{enumerate}
We call any partition satisfying \ref{item:partition-nonempty}--\ref{item:partition-imbalance} a \emph{valid partition} of $G$.
\end{lemma}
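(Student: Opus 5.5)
The plan is to take $V_i^1=S\cap V_i$ and $V_i^2=V_i\setminus S$ for a suitably chosen initial segment $S$ of an acyclic ordering $D_1,\dots,D_k$ of the strong components of $\vec G$, which exists by Lemma~\ref{lem:strong-order}. Conditions \ref{item:partition-forward} and \ref{item:partition-backward} hold for every initial segment, so all the work goes into choosing $s$ to get \ref{item:partition-nonempty} and \ref{item:partition-imbalance}.

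\textbf{Step 1: any initial segment gives \ref{item:partition-forward} and \ref{item:partition-backward}.} Since no arc enters $S$ from outside, every in-neighbour of $u\in V_i^1$ lies in $S$, that is, in $V_{i-1}^1$. Only out-neighbours can be lost, and they lie in $V_{i+1}^2$. Hence
\[
d_{G_1}(u)\ge d_G(u)-|V_{i+1}^2|\ge n+t-(n-|V_{i+1}^1|)=|V_{i+1}^1|+t.
\]
Symmetrically, no arc leaves $V\setminus S$, so for $u\in V_i^2$ every out-neighbour of $u$ lies outside $S$. Thus $d_{G_2}(u)\ge n+t-|V_{i-1}^1|=|V_{i-1}^2|+t$.

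\textbf{Step 2: a balanced piece with these degree conditions has many triangles.} I would prove the following auxiliary statement. Let $F$ be a tripartite graph with parts $W_0,W_1,W_2$ of sizes $m_0,m_1,m_2$, where $\max_i m_i-\min_i m_i<t$, and suppose every $u\in W_i$ has $d_F(u)\ge m_{i+1}+t$ (or every $u$ has $d_F(u)\ge m_{i-1}+t$). Then $\tau(F)$ is large, of order at least a constant times $\min_i m_i$.

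The mechanism would be the following. By the balance assumption, every vertex has degree exceeding the size of each other part, so every vertex has a neighbour in each other part. For an edge $uv$ with $u\in W_i$, $v\in W_{i+1}$, the number of common neighbours in $W_{i+2}$ is at least $d_{W_{i+2}}(u)+d_{W_{i+2}}(v)-m_{i+2}$. Summing over suitably chosen edges, for instance a matching or a star at each vertex of the smallest part, should give at least $\min_i m_i$ edge-disjoint witnesses of triangles. Even a linear lower bound suffices here.

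It follows that if $G_1$ or $G_2$ is balanced, then its smallest part has size at most about $\tau(G)$. Then \ref{item:partition-forward} (or \ref{item:partition-backward}), together with balance, forces the whole piece to have at most about $3\tau(G)+3t$ vertices. Alternatively one can use that its triangle-free vertices form a small set.

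\textbf{Step 3: choosing $s$ so that both pieces are large.} I expect this to be the main obstacle, because the sizes $|S|$ jump by whole components as $s$ increases. The plan is to show that each strong component is either small, meaning it contains a triangle and so is controlled by $\tau(G)$, or triangle-free.

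Consider a strong component $D$ that is triangle-free, and apply Step 1 to the initial segments just before and just after $D$. If the piece just before $D$ were balanced, then adding $D$ and using the degree condition inside $G[D]$ would produce a triangle. So a large triangle-free component must itself carry the imbalance, and I would try to show that it can only occur at the ends of the ordering.

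Concretely, I would take the smallest $s$ with $|S|>3\tau(G)+3t$. If Step 2 applies to both $S$ and its complement, then both are unbalanced. The bound $n>3\tau^2+2\tau-2$ is exactly what should leave room for both $S$ and $V\setminus S$ to exceed the threshold. The argument for this uses that the components containing triangles cover at most $3\tau(G)$ vertices, and that each of them has at most about $\tau(G)$ vertices in each part.

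Nonemptiness of all nine sets, condition \ref{item:partition-nonempty}, then follows because each piece has minimum degree exceeding one part size, so every part of each piece must be nonempty.
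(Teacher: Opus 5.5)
\textbf{There is a genuine gap: Step 2 is false, and Step 3 rests on it.} Your Step 1 is correct; it is the first half of the paper's argument. The trouble is Step 2. A balanced tripartite graph with the forward degree condition need not have many triangles. The whole graph $G$ is such a piece: it is the initial segment with $s=k$, all its parts have size $n$, and $d_G(u)\ge n+t$. The result $f(n,1)=4$ for $n\ge4$ of Bollob\'as, Erd\H{o}s and Szemer\'edi, or their $4t^3$ construction for $n\ge5t$, gives such a $G$ with a bounded number of triangles and $n$ arbitrarily large. So no bound of the form $\tau(F)\ge c\min_i m_i$ can hold, even restricted to initial segments.

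Your mechanism fails because the edge bound $\tau_G(uv)\ge t+d^+_{\vec G}(v)-d^+_{\vec G}(u)$ can be nonpositive on most edges. The degree terms only telescope when you sum around a directed cycle, and that yields at least $t$ triangles per nontrivial strong component, not per vertex. Your Step 3 assertions are also unjustified: you claim that components with triangles cover at most $3\tau(G)$ vertices, with about $\tau(G)$ per part. Controlling the vertices inside a component that lie in no triangle is the real substance of the proof. Finally, the obstacle you identify in Step 3 is not real. Since $|V_i^2|=n-|V_i^1|$, the imbalance of $V\setminus S$ equals that of $S$. Nonemptiness follows from the degree conditions once $S$ is nonempty and proper. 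So it suffices to find a single $t$-unbalanced initial segment.

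\textbf{The missing idea} is to argue by contradiction, assuming that \emph{every} initial segment $R$ satisfies $\max_i|R\cap V_i|-\min_i|R\cap V_i|\le t-1$. You then apply this to the segment preceding each component.
\begin{itemize}
\item A singleton component $\{u\}$ with $u\in V_i$ would give $d_G(u)\le|R\cap V_{i-1}|+n-|R\cap V_{i+1}|\le n+t-1$, which is impossible. So every component contains a directed cycle.
\item Summing the edge bound around that cycle gives $\tau(D_r)\ge t$. In particular, triangle-free strong components are always singletons, so your worry about large triangle-free components is vacuous.
\item Summing the edge bound along a path from the minimum to the maximum of $d^+_{\vec G}$ bounds its spread on $D_r$ by $2\tau(D_r)-t$.
\item Combined with balance of the preceding segment, this shows each vertex of $D_r\cap V_i$ has at most $3\tau(D_r)-2$ nonneighbours in $D_r\cap V_{i+1}$.
\item That bounds the vertices of $D_r$ lying in no triangle, and gives $|D_r|\le9\tau(D_r)^2+6\tau(D_r)-6$.
\end{itemize}
Summing over $r$ yields $3n\le9\tau(G)^2+6\tau(G)-6$, contradicting the hypothesis. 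This is where the quadratic threshold in the lemma comes from; linear thresholds like $3\tau(G)+3t$ cannot produce it.
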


\begin{proof}
We take the parts of $G$ in the order $(V_0,V_1,V_2)$ and let $\vec{G}$ denote the corresponding orientation. By Lemma~\ref{lem:strong-order}, we may list the vertex sets of its strong components as $D_1,\ldots,D_k$ in an acyclic ordering.

\begin{samepage}
\begin{claim}\label{clm:initial-segment-partition}
If there exists an initial segment $S$ such that $(S\cap V_0,S\cap V_1,S\cap V_2)$ is $t$-unbalanced, then, for $i=0,1,2$, the sets $V_i^1=S\cap V_i$ and $V_i^2=V_i\setminus S$ form a valid partition of $G$.
\end{claim}
\end{samepage}

\begin{proof}
Since $|V_0|=|V_1|=|V_2|=n$ and $(S\cap V_0,S\cap V_1,S\cap V_2)$ is $t$-unbalanced, $S$ is nonempty and proper.

For $u\in V_i^1$, we have $N_{\vec{G}}^-(u)\subseteq S$, since no arc enters $S$ from its complement. Thus its neighbors outside $S$ form the set $N_{\vec{G}}^+(u)\setminus S\subseteq V_{i+1}^2$. Hence $u$ loses at most $|V_{i+1}^2|$ neighbors when we restrict to $G_1$, that is, 
\[
d_{G_1}(u)\ge n+t-|V_{i+1}^2|=|V_{i+1}^1|+t.
\]
For $u\in V_i^2$, we have $N_{\vec{G}}^+(u)\cap S=\varnothing$, since no arc enters $S$ from its complement. Thus its neighbors in $S$ form the set $N_{\vec{G}}^-(u)\cap S\subseteq V_{i-1}^1$. Restricting to $G_2$, we have 
\[
d_{G_2}(u)\ge n+t-|V_{i-1}^1|=|V_{i-1}^2|+t.
\]
Thus the partition satisfies \ref{item:partition-forward} and \ref{item:partition-backward}. 

Since $S$ is nonempty and proper, it follows that both $G_1$ and $G_2$ are nonempty. Hence \ref{item:partition-nonempty} holds. Indeed, if $V_i^1=\emptyset$, then \ref{item:partition-forward} would force $V_{i+1}^1=\emptyset$ and then $V_{i+2}^1=\emptyset$, contradicting the fact that $G_1$ is nonempty. Applying the same argument in reverse cyclic order to $G_2$ using \ref{item:partition-backward}, we see that all six sets $V_i^j$ are nonempty.

By hypothesis, Condition~\ref{item:partition-imbalance} holds for $j=1$. Since $|V_i^2|=n-|V_i^1|$, 
\[
\max_i|V_i^2|-\min_i|V_i^2|
=\max_i|V_i^1|-\min_i|V_i^1|\ge t,
\]
so \ref{item:partition-imbalance} also holds for $j=2$.
\end{proof}

By Claim~\ref{clm:initial-segment-partition}, it remains to find such an initial segment. Suppose, to the contrary, that none exists. Since $t$ and the part sizes are integers, every initial segment $R$ then satisfies $\max_i|R\cap V_i|-\min_i|R\cap V_i|\le t-1$.

\begin{claim}\label{clm:nontrivial-components}
For each $i\in\{1,2,\ldots,k\}$, $|D_i|\ge2$.
\end{claim}

\begin{proof}
Suppose that $D_r=\{u\}$ with $u\in V_i$. Let $R=\bigcup_{s=1}^{r-1}D_s$, the union of the first $r-1$ component vertex sets in the fixed acyclic ordering. The acyclic ordering implies $N_{\vec{G}}^-(u)\subseteq R\cap V_{i-1}$ and $N_{\vec{G}}^+(u)\subseteq V_{i+1}\setminus R$. Hence
\[
d_G(u)\le |R\cap V_{i-1}|+n-|R\cap V_{i+1}|\le n+t-1,
\]
contrary to $\delta(G)\ge n+t$.
\end{proof}

For an arc $(u,v)$ with $u\in V_i$, we have $N_{\vec{G}}^-(u)\cup N_{\vec{G}}^+(v)\subseteq V_{i-1}$. Each vertex in their intersection forms a triangle containing $uv$, so $\tau_G(uv)=|N_{\vec{G}}^-(u)\cap N_{\vec{G}}^+(v)|$. Since $|V_{i-1}|=n$ and $d_{\vec{G}}^-(u)\ge n+t-d_{\vec{G}}^+(u)$, we have
\begin{equation}\label{eq:global-edge-count}
\tau_G(uv)\ge d_{\vec{G}}^-(u)+d_{\vec{G}}^+(v)-n
\ge t+d_{\vec{G}}^+(v)-d_{\vec{G}}^+(u).
\end{equation}


For $U\subseteq V(G)$, we write $\tau(U)$ for $\tau(G[U])$ when $G$ is clear. In particular, $\tau(V(G))=\tau(G)$.

\begin{claim}\label{clm:degree-spread}
For each $i\in\{1,\ldots,k\}$, $\tau(D_i)\ge t$ and
\begin{equation}\label{eq:component-spread}
\max_{u\in D_i}d_{\vec{G}}^+(u)-\min_{v\in D_i}d_{\vec{G}}^+(v)\le2\tau(D_i)-t.
\end{equation}
\end{claim}

\begin{proof}
By Claim~\ref{clm:nontrivial-components}, $|D_i|\ge2$, so the strong component with vertex set $D_i$ contains a simple directed cycle $\vec C$ with underlying undirected graph $C$. Since $\vec{G}$ has no loops or pairs of opposite arcs, $|E(C)|\ge3$.
We double-count pairs $(e,T)$, where $e\in E(C)$ and $T$ is a triangle of $G[D_i]$ containing $e$. Every triangle of $G$ containing an edge of $C$ lies in $G[D_i]$, so for each $e\in E(C)$, $\tau_G(e)=\tau_{G[D_i]}(e)$. 
Counting by edges and applying Inequality~\eqref{eq:global-edge-count}, we have 
\[
\sum_{(u,v)\in A(\vec C)}\tau_G(uv)
\ge |E(C)|t+\sum_{(u,v)\in A(\vec C)}\bigl(d_{\vec{G}}^+(v)-d_{\vec{G}}^+(u)\bigr) =|E(C)|t.
\]
Counting by triangles, each triangle contributes at most three pairs. Thus the number of pairs is at most $3\tau(D_i)$. Thus we have $3\tau(D_i) \geq 3t$, and $\tau(D_i)\ge t$ follows.

If $d_{\vec{G}}^+$ is not constant on $D_i$, choose $x,y\in D_i$ minimizing and maximizing $d_{\vec{G}}^+$, respectively. The strong component with vertex set $D_i$ contains a simple directed path $\vec P$ from $x$ to $y$ with underlying undirected graph $P$. As $x\ne y$, we have $|E(P)|\ge1$. 
We double-count pairs $(e,T)$, where $e\in E(P)$ and $T$ is a triangle of $G[D_i]$ containing $e$. Every triangle of $G$ containing an edge of $P$ lies in $G[D_i]$. 
Counting by edges and using Inequality~\eqref{eq:global-edge-count}, we have
\[  
\sum_{(u,v)\in A(\vec P)}\tau_G(uv) \ge |E(P)|t+\sum_{(u,v)\in A(\vec P)}\bigl(d_{\vec{G}}^+(v)-d_{\vec{G}}^+(u)\bigr) \ge t+d_{\vec{G}}^+(y)-d_{\vec{G}}^+(x). 
\] 
Counting by triangles, each triangle contributes at most two pairs as $P$ is a path, the number of pair is at most $2\tau(D_i)$. Together with the preceding inequality, Inequality~\eqref{eq:component-spread} follows. If $d_{\vec{G}}^+$ is constant on $D_i$, Inequality~\eqref{eq:component-spread} follows from $\tau(D_i)\ge t$.
\end{proof}

\begin{claim}\label{clm:few-nonneighbors}
For $r\in\{1,\ldots,k\}$ and $i\in\{0,1,2\}$, every vertex of $D_r\cap V_i$ has at most $3\tau(D_r)-2$ nonneighbors in $D_r\cap V_{i+1}$.
\end{claim}

\begin{proof}
Each triangle in $G[D_r]$ contains one vertex from each part, so at most $\tau(D_r)$ vertices in each part belong to triangles. If every vertex in $D_r$ belongs to a triangle, then $|D_r\cap V_{i+1}|\le \tau(D_r)\le3\tau(D_r)-2$, since Claim~\ref{clm:degree-spread} implies $\tau(D_r)\ge t\ge1$. The claim holds in this case. 

We may therefore assume that some vertex in $D_r$ belongs to no triangle. Choose such a vertex $x$ with maximum $d_{\vec{G}}^+(x)$, and suppose that $x\in V_j$.

We have $|N_{\vec{G}}^-(x)\cap D_r|\le \tau(D_r)$. Indeed, $N_{\vec{G}}^-(x)\cap D_r\subseteq V_{j-1}$, so it suffices to show that each of these vertices belongs to a triangle in $G[D_r]$. Suppose that some $y\in N_{\vec{G}}^-(x)\cap D_r$ belongs to no triangle in $G[D_r]$. Then $\tau_G(yx)=0$, because every triangle containing $yx$ lies in $G[D_r]$. By Inequality~\eqref{eq:global-edge-count} and the choice of $x$, we would have
\[
0=\tau_G(yx)\ge t+d_{\vec{G}}^+(x)-d_{\vec{G}}^+(y)\ge t,
\]
a contradiction.

Let $R=\bigcup_{s=1}^{r-1}D_s$. The acyclic ordering implies $N_{\vec{G}}^-(x)\setminus D_r\subseteq R\cap V_{j-1}$. Combining this with $|N_{\vec{G}}^-(x)\cap D_r|\le \tau(D_r)$, we have $d_{\vec{G}}^-(x)\le |R\cap V_{j-1}|+\tau(D_r)$, and therefore
\[
d_{\vec{G}}^+(x)\ge n+t-|R\cap V_{j-1}|-\tau(D_r).
\]
By Inequality~\eqref{eq:component-spread}, every $u\in D_r$ satisfies
\[
d_{\vec{G}}^+(u)\ge d_{\vec{G}}^+(x)-(2\tau(D_r)-t)
\ge n+2t-|R\cap V_{j-1}|-3\tau(D_r).
\]

No arc is directed from $D_r$ to $R$. Thus, for every $u\in D_r\cap V_i$, $N_{\vec{G}}^+(u)\subseteq V_{i+1}\setminus R$. This set has $n-|R\cap V_{i+1}|$ vertices and contains $D_r\cap V_{i+1}$. Consequently, the number of nonneighbors of $u$ in $D_r\cap V_{i+1}$ is at most
\[
\begin{aligned}
(n-|R\cap V_{i+1}|)-d_{\vec{G}}^+(u)
&\le |R\cap V_{j-1}|-|R\cap V_{i+1}|+3\tau(D_r)-2t\\
&\le3\tau(D_r)-t-1\le3\tau(D_r)-2.
\end{aligned}
\]
Here $|R\cap V_{j-1}|-|R\cap V_{i+1}|\le t-1$ follows from our assumption on initial segments, and the last inequality uses $t\ge1$. This completes the proof.
\end{proof}

\begin{claim}\label{clm:component-size}
For each $r\in\{1,\ldots,k\}$, $|D_r|\le9\tau(D_r)^2+6\tau(D_r)-6$.
\end{claim}

\begin{proof}
For convenience, let $b=3\tau(D_r)-2$. By Claim~\ref{clm:degree-spread}, we have $b \ge1$. By Claim~\ref{clm:few-nonneighbors}, every vertex of $D_r\cap V_i$ has at most $b$ nonneighbors in $D_r\cap V_{i+1}$.

For $i=0,1,2$, let $X_i$ be the set of vertices in $D_r\cap V_i$ that belong to no triangle in $G[D_r]$. We bound $|X_i|$ by counting the edges between $X_i$ and $D_r\cap V_{i+1}$.

By Claim~\ref{clm:nontrivial-components}, the strong component with vertex set $D_r$ has at least two vertices, so every vertex in $D_r$ has an out-neighbor in $D_r$. For $y\in D_r\cap V_{i+1}$, choose an out-neighbor $z\in D_r\cap V_{i-1}$. If $v\in X_i$ is adjacent to $y$, then $v$ cannot be adjacent to $z$, since otherwise $vyz$ would be a triangle in $G[D_r]$. The vertex $z$ has at most $b$ nonneighbors in $D_r\cap V_i$, so $y$ has at most $b$ neighbors in $X_i$. Summing over $y\in D_r\cap V_{i+1}$, we have
\[
e_G(X_i,D_r\cap V_{i+1})\le b |D_r\cap V_{i+1}|.
\]
On the other hand, by Claim~\ref{clm:nontrivial-components} and strong connectivity, every vertex of $X_i$ has an out-neighbor in $D_r$. By the definition of $\vec{G}$, this out-neighbor lies in $D_r\cap V_{i+1}$, so $e_G(X_i,D_r\cap V_{i+1})\ge |X_i|$. Moreover, by Claim~\ref{clm:few-nonneighbors}, every vertex of $X_i$ has at most $b$ nonneighbors in $D_r\cap V_{i+1}$, and hence at least $|D_r\cap V_{i+1}|-b$ out-neighbors there. Counting these edges from their ends in $X_i$, we obtain the second lower bound $e_G(X_i,D_r\cap V_{i+1})\ge |X_i|(|D_r\cap V_{i+1}|-b)$. Comparing each of these lower bounds with the upper bound above, we have
\begin{equation}\label{eq:nontriangle-counts} 
|X_i|\le b|D_r\cap V_{i+1}|, \quad 
|X_i|\bigl(|D_r\cap V_{i+1}|-b\bigr)\le b|D_r\cap V_{i+1}|. 
\end{equation}
If $|D_r\cap V_{i+1}|\le b$, the first of Inequalities~\eqref{eq:nontriangle-counts} implies $|X_i|\le b^2$. If $|D_r\cap V_{i+1}|>b$, then $|D_r\cap V_{i+1}|-b$ is a positive integer. By the second inequality,
\[
|X_i|\le\frac{b|D_r\cap V_{i+1}|}{|D_r\cap V_{i+1}|-b} =b+\frac{b^2}{|D_r\cap V_{i+1}|-b}\le b+b^2=b(b+1).
\]
Hence $|X_i|\le b(b+1)$ in both cases.

Moreover, if $|X_i|>2b$, the second of Inequalities~\eqref{eq:nontriangle-counts} forces $|D_r\cap V_{i+1}|<2b$, so $|X_{i+1}|\le |D_r\cap V_{i+1}|<2b$. Therefore at most one of $|X_0|,|X_1|,|X_2|$ can exceed $2b$, and
\[
|X_0|+|X_1|+|X_2|\le b(b+1)+4b=b^2+5b.
\]
By definition, every vertex in $D_r\setminus(X_0\cup X_1\cup X_2)$ belongs to a triangle of $G[D_r]$. The $\tau(D_r)$ triangles together contain at most $3\tau(D_r)$ distinct vertices. Hence
\[
|D_r|\le |X_0|+|X_1|+|X_2|+3\tau(D_r)\le3\tau(D_r)+b^2+5b=9\tau(D_r)^2+6\tau(D_r)-6.\qedhere
\]
\end{proof}

Claim~\ref{clm:component-size} bounds the size of every $D_r$. Every triangle lies in exactly one strong component of $\vec{G}$, so $\sum_{r=1}^k\tau(D_r)=\tau(G)$. Since $D_1,\ldots,D_k$ partition $V(G)$, we have
\[
3n=\sum_{r=1}^k|D_r|
\le9\sum_{r=1}^k\tau(D_r)^2+6\tau(G)-6k
\le9\tau(G)^2+6\tau(G)-6,
\]
where we used $\sum_{r=1}^k\tau(D_r)^2\le(\sum_{r=1}^k\tau(D_r))^2$ and $k\ge1$. This contradicts our assumption that $n>3\tau(G)^2+2\tau(G)-2$. Hence the required initial segment exists, and its associated partition is valid by Claim~\ref{clm:initial-segment-partition}. This completes the proof of the lemma.
\end{proof}

\section{Proof of Theorem~\ref{thm:lower}}\label{sec:local}

The partition lemma reduces Theorem~\ref{thm:lower} to the following result for a tripartite graph whose parts need not have equal sizes.

\begin{theorem}\label{thm:local}
Let $t$ be a positive integer, and let $H=(V_0,V_1,V_2)$ be a tripartite graph with nonempty parts ordered as $(V_0,V_1,V_2)$. Suppose that $(V_0,V_1,V_2)$ is $t$-unbalanced and that, for each $i\in\{0,1,2\}$ and every $u\in V_i$, $d_H(u)\ge |V_{i+1}|+t$. Then $\tau(H)\ge6t^3/5$.
\end{theorem}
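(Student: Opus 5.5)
The plan is to reduce to edge-by-edge triangle counts, as in \eqref{eq:global-edge-count}, and then sum these local bounds over carefully chosen sets of edges. Write $a_i=|V_i|$ and orient $H$ as $\vec H$ from $V_i$ to $V_{i+1}$. For $u\in V_i$ put $f(u)=d_{\vec H}^-(u)=d_{V_{i-1}}(u)$ and $g(u)=d_{\vec H}^+(u)$. The hypothesis reads $f(u)\ge t+(a_{i+1}-g(u))$, so every vertex has at least $t$ backward neighbours, and at least $t$ more than its number of forward non-neighbours.

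For an arc $(u,v)$ with $u\in V_i$, $v\in V_{i+1}$, the triangles on $uv$ are the common neighbours in $V_{i-1}$. Hence
\[
\tau_H(uv)\ge f(u)+g(v)-a_{i-1}\ge f(u)+t-f(v),
\]
where the second step uses $g(v)\ge a_{i-1}+t-f(v)$. This is the unbalanced analogue of \eqref{eq:global-edge-count}, with the potential $f$ in place of $-d^+$. Summing it along a directed closed walk makes the potential terms cancel, giving at least $t$ triangles per edge counted with multiplicity at most three. Summing along a directed path from a vertex of small $f$ to one of large $f$ bounds the spread of $f$ by twice the number of triangles, as in Claim~\ref{clm:degree-spread}. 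These two tools are what I will use.

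Next I use the imbalance to force many such edges. Since $(V_0,V_1,V_2)$ is $t$-unbalanced, there is an index $i$ with $a_{i-1}\ge a_{i+1}+t$, or with $a_{i+1}\ge a_{i-1}+t$. After possibly reversing the cyclic order (which exchanges the roles of $f$ and $g$ but keeps the hypothesis in a symmetric form), I may assume the former. Take $u\in V_i$. It has at least $t$ backward neighbours $w\in V_{i-1}$. Each such $w$ has $f(w)\ge t+a_i-g(w)$ and $g(w)\le a_i$, and the part $V_{i+1}$ sitting ``behind'' $u$ is short by at least $t$. The plan is to show that for a suitably extremal vertex, say $x\in V_{i-1}$ maximizing $f$, every edge $xv$ to its forward neighbourhood has $\tau(xv)\ge t$. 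Moreover, the at least $t$ vertices $v\in N^+(x)\cap V_i$ carry at least $t$ further triangle-rich edges among themselves. This gives a product structure of roughly $t\times t\times t$ forced triangles, in which each triangle is counted at most a bounded number of times.

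To improve the crude $t^3$ to $6t^3/5$, I will run the count from two sides. One count goes over $t$ backward neighbours of an extremal vertex; the other uses the imbalance $a_{i-1}-a_{i+1}\ge t$ to produce a second family of at least $t$ vertices whose incident edges each carry at least $t-(\text{spread of }f)$ triangles. If the spread is large, the path argument already yields many triangles. If it is small, both families contribute nearly $t^2$ each, with overlaps bounded by the three-per-triangle multiplicity. Introducing a parameter $s$ for the spread, I then obtain two lower bounds on $\tau(H)$, one increasing and one decreasing in $s$, and minimize their maximum over $s$. I expect the balancing point to give the constant $6/5$.

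The main obstacle will be this final balancing step. The difficulty is to set up the two families so that their triangle contributions are genuinely disjoint, or have controlled overlap, while the losses from the spread of $f$ enter linearly. If the natural bookkeeping gives a weaker constant, my first fallback is to double-count pairs (edge, triangle) with weights on the three edge types $V_iV_{i+1}$, and to choose the weights to optimize the bound.
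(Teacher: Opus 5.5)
Your proposal has a genuine gap. The edge bound $\tau_H(uv)\ge t+f(u)-f(v)$ is correct (it is a variant of \eqref{eq:local-edge}), but beyond it the proposal is a plan rather than a proof, and the steps that carry the weight fail.

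First, the cycle and path arguments only give bounds linear in $t$: $\tau(H)\ge t$, and spread of $f$ at most $2\tau(H)-t$.

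Second, the reduction ``after possibly reversing the cyclic order'' is invalid. For $u\in V_i$ the hypothesis is $f(u)\ge t+\bar{d}_{V_{i+1}}(u)$, a surplus on the backward side only. After reversal it becomes $f(u)+g(u)\ge|V_{i-1}|+t$. This is strictly stronger on any part with $|V_{i-1}|>|V_{i+1}|$, and such a part exists whenever the triple is unbalanced. So the two imbalance patterns are genuinely different cases.

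Third, nothing gives your extremal $x$ at least $t$ forward neighbours. The hypothesis only yields $g(x)\ge|V_i|+t-f(x)$, which is weakest exactly when $f(x)$ is large and is nonpositive once $f(x)\ge|V_i|+t$. Also, $\tau_H(xv)\ge t$ for $v\in V_i$ needs $f(x)\ge f(v)$, i.e.\ a global maximum of $f$, not one over $V_{i-1}$.

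Fourth, the balancing in the spread $s$ cannot give a cubic bound. The increasing bound from the path argument is $(t+s)/2$, while the per-edge guarantee $t-s$ is vacuous once $s\ge t$. Hence the minimum over $s$ of the better of your two bounds is at most about $t$, not order $t^3$.

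The underlying problem is that these local inequalities never see the part sizes. $K_{t,t,t}$ meets the degree condition with equality, makes every edge inequality tight, and has exactly $t^3$ triangles; only the imbalance excludes it. So the extra $t^3/5$ must be extracted from the imbalance, and your plan never says how.

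The paper does this by induction on $t$:
\begin{enumerate}
\item It takes $S_i$ to be $t$ vertices of largest out-degree in each $V_i$. An exact accounting of triangles with one, two or three vertices in $S$ (Lemma~\ref{lem:core}) gives $\tau_1+\tau_2+\tau_3\ge t^3+t\max_j\bar{e}_H(S_j,S\setminus S_j)$ and $\tau_2\le2(\tau_1+\tau_2+\tau_3-t^3)$.
\item If $\tau(H)<\frac65t^3$, then $S$ has few nonedges and $\tau_2$ is small. A volume argument over deletion thresholds then removes $S$ and a few further vertices so that the remainder $R\subseteq H-S$ again satisfies the hypotheses with parameter $\lceil\eta t\rceil$, where $\eta=\frac14+2\tau_0/t^3$.
\item The imbalance is precisely what keeps $R$ nonempty and unbalanced. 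The induction hypothesis then gives $\tau_0\ge\tau(R)\ge\frac65\eta^3t^3>\tau_0$, a contradiction.
\end{enumerate}
Thus $6/5$ is the constant that makes this recursion close, not the outcome of optimizing over a spread parameter. To repair your approach you would need a comparable mechanism that turns the surplus vertices supplied by the imbalance into $\Omega(t^3)$ further triangles.
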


We first show how this theorem completes the proof of Theorem~\ref{thm:lower}.

\begin{proof}[Proof of Theorem~\ref{thm:lower} (Assuming Theorem~\ref{thm:local})]
Let $t,n$ be integers with $t\ge2$ and $n\ge18t^6$. Let $G=(V_0,V_1,V_2)\in\mathcal{G}_3(n)$ have minimum degree at least $n+t$. Suppose, to the contrary, that $\tau(G)<12t^3/5$. We first check the size condition in Lemma~\ref{lem:prefix}. Since $t^3\ge8$,
\[
3\tau(G)^2+2\tau(G)-2
<\frac{432}{25}t^6+\frac{24}{5}t^3
\le\frac{447}{25}t^6
<18t^6\le n.
\]
Apply Lemma~\ref{lem:prefix} to obtain a valid partition of $G$. For $i=0,1,2$, write $V_i=V_i^1\cup V_i^2$, and let $G_1,G_2$ be the induced graphs defined in the lemma.

By Conditions~\ref{item:partition-nonempty} and~\ref{item:partition-imbalance}, the parts of each graph are nonempty and form a $t$-unbalanced triple. To apply Theorem~\ref{thm:local}, we take the parts of $G_1$ in the order $(V_0^1,V_1^1,V_2^1)$ and those of $G_2$ in the order $(V_0^2,V_2^2,V_1^2)$. With these orders, Conditions~\ref{item:partition-forward} and~\ref{item:partition-backward} are precisely the required degree conditions. Applying Theorem~\ref{thm:local} to both graphs, we have
\[
\tau(G)\ge \tau(G_1)+\tau(G_2)\ge\frac65t^3+\frac65t^3=\frac{12}{5}t^3,
\]
where the first inequality holds because $G_1$ and $G_2$ are vertex-disjoint. This contradicts the assumed upper bound on $\tau(G)$ and completes the proof.
\end{proof}

In the rest of this section, let $t$ be a fixed positive integer and let $H=(V_0,V_1,V_2)$ satisfy the hypotheses of Theorem~\ref{thm:local}. We take the parts of $H$ in the order $(V_0,V_1,V_2)$ and use the corresponding orientation $\vec{H}$ defined in Section~\ref{sec:decomposition}.

For each $i\in\{0,1,2\}$ and every $u\in V_i$,
\[
d_{\vec{H}}^-(u)=d_H(u)-d_{\vec{H}}^+(u)\ge |V_{i+1}|+t-d_{\vec{H}}^+(u)\ge t.
\]
Since $N_{\vec{H}}^-(u)\subseteq V_{i-1}$, each part has at least $t$ vertices. For each $i\in\{0,1,2\}$, let $S_i\subseteq V_i$ be a set of $t$ vertices with the largest out-degrees in $\vec{H}$. If some vertices have the same out-degree, we choose any of them as needed. Let $S=S_0\cup S_1\cup S_2$.

For each $i \in \{0,1,2,3\}$, let $\tau_i$ denote the number of triangles of $H$ with exactly $i$ vertices in $S$. Thus $\tau_0=\tau(H-S)$, $\tau_3=\tau(S)$, and $\tau(H)=\tau_0+\tau_1+\tau_2+\tau_3$.

Our proof is by induction on $t$. Deleting $S$ alone may leave some vertices with too few neighbors for induction to apply. We therefore delete some additional vertices and apply the induction hypothesis to the remaining graph to obtain a contradiction. In Subsection~\ref{subsec:triangle-counts}, we count triangles meeting $S$. In Subsection~\ref{subsec:deletions}, we choose additional vertices to delete. In Subsection~\ref{subsec:induction}, we complete the induction and hence the proof of Theorem~\ref{thm:lower}.

\subsection{Counting triangles meeting \texorpdfstring{$S$}{S}}\label{subsec:triangle-counts}

For $U\subseteq V(H)$ and $v\in V(H)\setminus U$, let 
$$\bar{d}_U(v)=|U|-d_U(v)$$ denote the number of nonneighbors of $v$ in $U$. For each $i\in\{0,1,2\}$ and every $v\in V_i$, the degree condition implies $d_{\vec{H}}^-(v)\ge t+\bar{d}_{V_{i+1}}(v)$.
Thus for each $u\in S_i$, we have
\begin{equation}\label{eq:outside-indegree}
d_{V_{i-1}\setminus S_{i-1}}(u)=d_{\vec{H}}^-(u)-d_{S_{i-1}}(u)\ge t+\bar{d}_{V_{i+1}}(u) -d_{S_{i-1}}(u)  = \bar{d}_{S_{i-1}}(u)+\bar{d}_{V_{i+1}}(u).
\end{equation}

For an arc $(u,v)$ of $\vec{H}$ with $u\in V_i$, we have $N_{\vec{H}}^-(u)\cup N_{\vec{H}}^+(v)\subseteq V_{i-1}$ and $\tau_H(uv)=|N_{\vec{H}}^-(u)\cap N_{\vec{H}}^+(v)|$. Hence
\begin{equation}\label{eq:local-edge}
\tau_H(uv)\ge d_{\vec{H}}^-(u)+d_{\vec{H}}^+(v)-|V_{i-1}|\ge t+\bar{d}_{V_{i+1}}(u)-\bar{d}_{V_{i-1}}(v).
\end{equation}

We first count triangles with exactly one vertex in $S$. 
\begin{lemma}\label{lem:two-outside}
The number $\tau_1$ of triangles with exactly one vertex in $S$ satisfies
\[
\tau_1\ge\sum_i\sum_{u\in S_i}\bigl[\bar{d}_{S_{i-1}}(u)\bar{d}_{S_{i+1}}(u)+\bigl(\bar{d}_{S_{i+1}}(u)-\bar{d}_{S_{i-1}}(u)\bigr)\bar{d}_{V_{i+1}}(u)\bigr].
\]
\end{lemma}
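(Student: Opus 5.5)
Fix $i$ and $u\in S_i$. I will count exactly the triangles through $u$ whose other two vertices lie outside $S$; these triangles have $u$ as their unique vertex in $S$, so summing over all $u\in S$ gives $\tau_1$ with no double counting. Such a triangle is $uvw$ with $v\in B_u:=N_{\vec H}^+(u)\setminus S_{i+1}$ and $w\in A_u:=N_{\vec H}^-(u)\setminus S_{i-1}$. Hence the contribution of $u$ is $e_H(B_u,A_u)$, and it suffices to prove the bracketed quantity as a lower bound for $e_H(B_u,A_u)$ for each $u$ separately. I abbreviate $a=\bar d_{S_{i-1}}(u)$, $b=\bar d_{S_{i+1}}(u)$ and $c=\bar d_{V_{i+1}}(u)$; the goal is $e_H(B_u,A_u)\ge ab+(b-a)c$.

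\emph{Sizes.} Inequality~\eqref{eq:outside-indegree} already gives $|A_u|\ge a+c$. Directly, $|B_u|=d_{\vec H}^+(u)-d_{S_{i+1}}(u)=|V_{i+1}|-c-t+b$.

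\emph{First estimate, over $w\in A_u$.} Apply \eqref{eq:local-edge} to the arc $(w,u)$ and discard the triangles whose middle vertex lies in $S_{i+1}\cap N_{\vec H}^+(u)$, of which there are at most $t-b$. This gives
\[
|N^-_{\vec H}(w)\cap B_u|\ge b-c+\bar d_{V_i}(w).
\]
Summing over a set of $a+c$ vertices of $A_u$ yields $(a+c)(b-c)=ab+(b-a)c-c^2$. This falls short of the target by exactly $c^2$.

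\emph{Second estimate, over $v\in B_u$.} Apply \eqref{eq:local-edge} to the arc $(u,v)$ and discard the at most $t-a$ vertices of $S_{i-1}\cap N_{\vec H}^-(u)$. This gives
\[
|N^+_{\vec H}(v)\cap A_u|\ge a+c-\bar d_{V_{i-1}}(v).
\]

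\emph{The main obstacle} is recovering the missing $c^2$. My plan is to combine the two estimates, or to keep the term $\sum_{w}\bar d_{V_i}(w)$ that the first estimate discards. Since $|A_u|$ may exceed $a+c$, I would sum the first estimate over all of $A_u$ rather than over a subset of size $a+c$. The slack that the degree condition on $w$ gives, namely $d_{\vec H}^-(w)\ge t+\bar d_{V_i}(w)$, should then pay for the $c$ out-nonneighbours of $u$ in $V_{i+1}$.

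Concretely, I would split $A_u$ into two pieces. The first piece is a set of $a$ vertices; for these I use the bound $b-c$ from the first estimate. The remaining $\ge c$ vertices come from the extra in-degree of $u$ forced by its $c$ nonneighbours in $V_{i+1}$; each of these must see at least $b$ vertices of $B_u$. To justify this I would use the choice of $S_{i+1}$ as the $t$ vertices of largest out-degree: a vertex $v\notin S_{i+1}$ has out-degree at most that of every vertex in $S_{i+1}$. This should prevent the $c$ missing out-neighbours of $u$ from absorbing the in-neighbourhoods of these extra $w$'s.

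If this splitting argument fails, the fallback is to bound $e_H(B_u,A_u)$ from the $v$-side. I would use the second estimate and bound $\sum_{v\in B_u}\bar d_{V_{i-1}}(v)$ by counting non-edges against the in-degree lower bounds of vertices of $A_u$. I expect this bookkeeping of the $c^2$ term to be the only delicate point. The summation over $u$ and $i$ is then immediate.
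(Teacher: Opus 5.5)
\textbf{The reduction to a per-vertex inequality is false.} You aim to prove $e_H(B_u,A_u)\ge ab+(b-a)c$ for each $u\in S_i$ separately. That inequality does not hold in general, so no bookkeeping of the $c^2$ term can rescue it, whether by splitting $A_u$ or by the $v$-side fallback.

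Here is a counterexample satisfying all hypotheses of Theorem~\ref{thm:local} with $t=1$. Let $V_0=\{u,x_1,x_2,x_3\}$, $V_1=\{s_1,y^*,y_1,\dots,y_6\}$ and $V_2=\{s_2,w_1,w_2,z_1,z_2\}$. Put in all edges between different parts except the following: $us_1,uy^*,uz_1,uz_2$; $x_1s_1,x_1y_1,x_1y_2,x_1w_1$; $x_2s_1,x_2y_3,x_2y_4,x_2w_2$; $x_3y^*,x_3y_5,x_3y_6$; $y^*z_2$; and $y_jw_1,y_jw_2$ for $1\le j\le6$.

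Then every vertex of $V_0,V_1,V_2$ has degree at least $9,6,5$ respectively, as required. The part sizes $4,8,5$ are $1$-unbalanced. Moreover, $u,s_1,s_2$ are the unique vertices of largest out-degree in their parts, so $S=\{u,s_1,s_2\}$ is forced and tie-breaking plays no role.

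At $u$ we have $a=0$, $b=1$, $c=2$, so the bracket equals $2$. Yet $B_u=\{y_1,\dots,y_6\}$ and $A_u=\{w_1,w_2\}$ span no edge. The in-neighbourhoods of $w_1$ and $w_2$ are exactly $\{s_1,y^*\}$, that is, $S_1$ together with $u$'s missing out-neighbour. This is precisely the absorption you hoped the choice of $S_{i+1}$ would rule out. The lemma survives only because $s_1$ lies in four triangles counted by $\tau_1$ (through $x_3$), while its own bracket is $0$.

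\textbf{What is missing.} The $bc$ part of $u$'s bracket must be charged to triangles at $u$'s nonneighbours in $S_{i+1}$, not to triangles at $u$. The paper does this as follows.
\begin{enumerate}
\item For $u\in S_i$ and each nonneighbour $x\in S_{i-1}$ of $u$, choose distinct mates $m_x\in N_H(u)\cap(V_{i-1}\setminus S_{i-1})$. This is possible by \eqref{eq:outside-indegree}.
\item Because $S_{i-1}$ consists of the vertices of largest out-degree, $\bar d_{V_i}(m_x)\ge\bar d_{V_i}(x)$. This term enters \eqref{eq:local-edge} for the arc $(m_x,u)$ with a plus sign. Hence $um_x$ lies in at least $\bar d_{V_i}(x)+b-c$ triangles whose third vertex is outside $S$.
\item Summing over all $u$ gives $\sum a(b-c)$ plus $\sum\bar d_{V_i}(x)$ taken over all non-adjacent pairs $x\in S_{i-1}$, $u\in S_i$.
\item That second sum equals $\sum_{x\in S_{i-1}}\bar d_{S_i}(x)\bar d_{V_i}(x)$. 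After shifting the index, this is exactly $\sum bc$.
\end{enumerate}

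The ordering you invoke, comparing $S_{i+1}$ with a vertex $v\notin S_{i+1}$, points the wrong way. It says $v$ has at least as many out-nonneighbours as the vertices of $S_{i+1}$. That can only weaken the bound $a+c-\bar d_{V_{i-1}}(v)$ in your second estimate.
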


\begin{proof}
Let $i\in\{0,1,2\}$ and $u\in S_i$. For each nonneighbor $x\in S_{i-1}$ of $u$, we choose a neighbor $m_x\in V_{i-1}\setminus S_{i-1}$ of $u$ as the mate of $x$, so that the vertices $m_x$ are pairwise distinct. Such a choice is possible because $d_{V_{i-1}\setminus S_{i-1}}(u)\ge\bar{d}_{S_{i-1}}(u)$ by Inequality~\eqref{eq:outside-indegree}.

For each such $x$, the choice of $S_{i-1}$ ensures $\bar{d}_{V_i}(m_x)\ge\bar{d}_{V_i}(x)$. Applying Inequality~\eqref{eq:local-edge} to the arc $(m_x,u)$, we have
\[
\tau_H(um_x)\ge t+\bar{d}_{V_i}(m_x)-\bar{d}_{V_{i+1}}(u)\ge t+\bar{d}_{V_i}(x)-\bar{d}_{V_{i+1}}(u).
\]
At most $d_{S_{i+1}}(u)$ of these triangles have their third vertex in $S_{i+1}$. Thus for every $u\in S_i$ and $x\in S_{i-1}\setminus N_H(u)$, the number of triangles containing $um_x$ and having their third vertex outside $S$ is at least
\begin{equation}\label{eq:paired-edge-triangles}
t+\bar{d}_{V_i}(x)-\bar{d}_{V_{i+1}}(u) - d_{S_{i+1}}(u) =  \bar{d}_{V_i}(x)+\bar{d}_{S_{i+1}}(u)-\bar{d}_{V_{i+1}}(u).
\end{equation}
Each such triangle has exactly one vertex in $S$.

Each triangle counted in~\eqref{eq:paired-edge-triangles} uniquely determines its vertex $u$ in $S$ and its vertex $m_x$ in the preceding part. Since the vertices $m_x$ chosen for a fixed $u$ are pairwise distinct, no triangle is counted twice. Summing the lower bound in~\eqref{eq:paired-edge-triangles} over all $i$, $u\in S_i$ and $x\in S_{i-1}\setminus N_H(u)$, we obtain
\[
\tau_1\ge\sum_i\sum_{\substack{u\in S_i,\ x\in S_{i-1}\\xu\notin E(H)}}\bigl(\bar{d}_{V_i}(x)+\bar{d}_{S_{i+1}}(u)-\bar{d}_{V_{i+1}}(u)\bigr).
\]
For each $x\in S_{i-1}$, the term $\bar{d}_{V_i}(x)$ is included once for every nonneighbor $u\in S_i$ of $x$, and hence appears $\bar{d}_{S_i}(x)$ times. Similarly, for each $u\in S_i$, the expression $\bar{d}_{S_{i+1}}(u)-\bar{d}_{V_{i+1}}(u)$ is included once for every nonneighbor $x\in S_{i-1}$ of $u$, and hence appears $\bar{d}_{S_{i-1}}(u)$ times. The double sum therefore equals
\[
\sum_i\sum_{x\in S_{i-1}}\bar{d}_{S_i}(x)\bar{d}_{V_i}(x)+\sum_i\sum_{u\in S_i}\bar{d}_{S_{i-1}}(u)\bigl(\bar{d}_{S_{i+1}}(u)-\bar{d}_{V_{i+1}}(u)\bigr).
\]
We relabel $i-1$ as $i$ and $x$ as $u$ in the first sum and combine the two sums to obtain the stated bound.
\end{proof}

We next count triangles with exactly two vertices in $S$.
For disjoint sets $A,B\subseteq V(H)$, let $$\bar{e}_H(A,B)=|A||B|-e_H(A,B)$$ denote the number of nonedges between $A$ and $B$. For each $i\in\{0,1,2\}$, counting these nonedges from their endpoints in $S_i$, we have
\[
\sum_{u\in S_i}\bar{d}_{S_{i-1}}(u)=\bar{e}_H(S_{i-1},S_i),\quad \sum_{u\in S_i}\bar{d}_{S_{i+1}}(u)=\bar{e}_H(S_i,S_{i+1}).
\]

\begin{lemma}\label{lem:one-outside}
The number $\tau_2$ of triangles with exactly two vertices in $S$ satisfies
\[
\tau_2\ge2t\sum_i\bar{e}_H(S_i,S_{i+1})+\sum_i\sum_{u\in S_i}\bigl[\bigl(\bar{d}_{S_{i-1}}(u)-\bar{d}_{S_{i+1}}(u)\bigr)\bar{d}_{V_{i+1}}(u)-2\bar{d}_{S_{i-1}}(u)\bar{d}_{S_{i+1}}(u)\bigr].
\]
\end{lemma}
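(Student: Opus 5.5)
Every triangle with exactly two vertices in $S$ has exactly one edge inside $S$. That edge joins some $u\in S_i$ to some $v\in S_{i+1}$, and the third vertex lies in $V_{i-1}\setminus S_{i-1}$. So I will write
\[
\tau_2=\sum_i\sum_{uv\in E(H),\,u\in S_i,\,v\in S_{i+1}}\bigl|N_{\vec H}^-(u)\cap N_{\vec H}^+(v)\setminus S_{i-1}\bigr|,
\]
with no double counting, and bound each summand from below. The natural first attempt is \eqref{eq:local-edge} minus a crude bound on the common neighbours inside $S_{i-1}$. That only yields half of the $2t\sum_i\bar e_H(S_i,S_{i+1})$ term. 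I therefore plan to run the inclusion–exclusion of \eqref{eq:local-edge} directly inside $V_{i-1}\setminus S_{i-1}$, a set of size $|V_{i-1}|-t$.

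\textbf{Per-edge bound.} For $u\in S_i$, the degree condition gives $d^-_{\vec H}(u)\ge t+\bar d_{V_{i+1}}(u)$. Removing its $t-\bar d_{S_{i-1}}(u)$ in-neighbours inside $S_{i-1}$ gives
\[
|N^-_{\vec H}(u)\setminus S_{i-1}|\ge \bar d_{V_{i+1}}(u)+\bar d_{S_{i-1}}(u).
\]
For $v\in S_{i+1}$ we have $|N^+_{\vec H}(v)|=|V_{i-1}|-\bar d_{V_{i-1}}(v)$. Removing $t-\bar d_{S_{i-1}}(v)$ out-neighbours inside $S_{i-1}$ gives
\[
|N^+_{\vec H}(v)\setminus S_{i-1}|\ge |V_{i-1}|-t-\bar d_{V_{i-1}}(v)+\bar d_{S_{i-1}}(v).
\]
Both sets lie in $V_{i-1}\setminus S_{i-1}$, so inclusion–exclusion bounds the number of triangles on $uv$ with third vertex outside $S$ by
\[
\bar d_{V_{i+1}}(u)-\bar d_{V_{i-1}}(v)+\bar d_{S_{i-1}}(u)+\bar d_{S_{i-1}}(v).
\]

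\textbf{Summation.} I will sum this over edges $uv$ with $u\in S_i$, $v\in S_{i+1}$, and over $i$, treating the terms in two groups.

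For the difference term, I sum over all $t^2$ pairs and then subtract the nonedges. Over all pairs it gives $t\sum_{u\in S_i}\bar d_{V_{i+1}}(u)-t\sum_{v\in S_{i+1}}\bar d_{V_{i+2}}(v)$. Summed over $i$ this telescopes to $0$, since each vertex of $S_j$ appears once positively and once negatively. The nonedge correction is $-\sum_{u\in S_i}\bar d_{S_{i+1}}(u)\bar d_{V_{i+1}}(u)+\sum_{v\in S_{i+1}}\bar d_{S_i}(v)\bar d_{V_{i+2}}(v)$. After relabelling it becomes $\sum_i\sum_{u\in S_i}\bigl(\bar d_{S_{i-1}}(u)-\bar d_{S_{i+1}}(u)\bigr)\bar d_{V_{i+1}}(u)$.

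For the term $\bar d_{S_{i-1}}(u)$, each $u\in S_i$ has $t-\bar d_{S_{i+1}}(u)$ edges to $S_{i+1}$. This contributes $\sum_{u\in S_i}\bar d_{S_{i-1}}(u)\bigl(t-\bar d_{S_{i+1}}(u)\bigr)=t\,\bar e_H(S_{i-1},S_i)-\sum_{u\in S_i}\bar d_{S_{i-1}}(u)\bar d_{S_{i+1}}(u)$. The term $\bar d_{S_{i-1}}(v)=\bar d_{S_{i+2}}(v)$ is handled symmetrically: each $v\in S_{i+1}$ has $t-\bar d_{S_i}(v)$ edges back to $S_i$. After relabelling, this contributes the same expression once more, namely $t\sum_i\bar e_H(S_i,S_{i+1})-\sum_i\sum_{u\in S_i}\bar d_{S_{i-1}}(u)\bar d_{S_{i+1}}(u)$.

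Adding the two groups gives exactly the stated bound.

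\textbf{Main obstacle.} The delicate point is the per-edge step. One must apply inclusion–exclusion in $V_{i-1}\setminus S_{i-1}$ rather than in $V_{i-1}$, so that both endpoints' nonadjacencies into $S_{i-1}$ are gained; this is what produces the factor $2$. The rest is bookkeeping with the identities for $\bar e_H$ stated before the lemma.
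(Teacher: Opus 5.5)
Your proof is correct and matches the paper's argument: working inside $V_{i-1}\setminus S_{i-1}$ gives exactly the paper's per-arc bound $\bar{d}_{S_{i-1}}(u)+\bar{d}_{V_{i+1}}(u)+\bar{d}_{S_{i-1}}(v)-\bar{d}_{V_{i-1}}(v)$. Your summation is only a regrouping of the paper's bookkeeping. You telescope the $\bar{d}_V$ terms over all pairs minus nonedges and count the $\bar{d}_{S_{i-1}}$ terms by edge multiplicities, while the paper weights each vertex's tail and head contributions by $d_{S_{i+1}}(u)$ and $d_{S_{i-1}}(u)$.
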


\begin{proof}
We count pairs $((u,v),T)$, where $(u,v)$ is an arc with both endpoints in $S$ and $T$ is a triangle with exactly two vertices in $S$ containing the edge $uv$. Each such triangle contains exactly one arc with both endpoints in $S$, so it is counted exactly once and there are $\tau_2$ such pairs.

Let $(u,v)$ be an arc from $S_i$ to $S_{i+1}$. We have $N_{\vec{H}}^-(u)\cup N_{\vec{H}}^+(v)\subseteq V_{i-1}$. By Inequality~\eqref{eq:outside-indegree}, the vertex $u$ has at least $\bar{d}_{S_{i-1}}(u)+\bar{d}_{V_{i+1}}(u)$ in-neighbors in $V_{i-1}\setminus S_{i-1}$. The vertex $v$ has $\bar{d}_{V_{i-1}}(v)-\bar{d}_{S_{i-1}}(v)$ nonneighbors there. Therefore there are at least
\[
\bar{d}_{S_{i-1}}(u)+\bar{d}_{V_{i+1}}(u)-\bigl(\bar{d}_{V_{i-1}}(v)-\bar{d}_{S_{i-1}}(v)\bigr)=\bar{d}_{S_{i-1}}(u)+\bar{d}_{V_{i+1}}(u)+\bar{d}_{S_{i-1}}(v)-\bar{d}_{V_{i-1}}(v)
\]
triangles containing $uv$ whose third vertex lies in $V_{i-1}\setminus S_{i-1}$.

\begin{samepage}
For each $u\in S_i$, there are $d_{S_{i+1}}(u)$ such arcs with tail $u$ and $d_{S_{i-1}}(u)$ such arcs with head $u$. Its contributions at these two ends are $\bar{d}_{S_{i-1}}(u)+\bar{d}_{V_{i+1}}(u)$ and $\bar{d}_{S_{i+1}}(u)-\bar{d}_{V_{i+1}}(u)$, respectively. Counting by arcs, we obtain the following lower bound on $\tau_2$:
\begin{align*}
&\sum_i\sum_{u\in S_i}\bigl[\bigl(\bar{d}_{S_{i-1}}(u)+\bar{d}_{V_{i+1}}(u)\bigr)d_{S_{i+1}}(u)+\bigl(\bar{d}_{S_{i+1}}(u)-\bar{d}_{V_{i+1}}(u)\bigr)d_{S_{i-1}}(u)\bigr]\\*
=&\sum_i\sum_{u\in S_i}\bigl[\bigl(\bar{d}_{S_{i-1}}(u)+\bar{d}_{V_{i+1}}(u)\bigr)\bigl(t-\bar{d}_{S_{i+1}}(u)\bigr)+\bigl(\bar{d}_{S_{i+1}}(u)-\bar{d}_{V_{i+1}}(u)\bigr)\bigl(t-\bar{d}_{S_{i-1}}(u)\bigr)\bigr]\\*
=&\sum_i\sum_{u\in S_i}\bigl[t\bigl(\bar{d}_{S_{i-1}}(u)+\bar{d}_{S_{i+1}}(u)\bigr)+\bigl(\bar{d}_{S_{i-1}}(u)-\bar{d}_{S_{i+1}}(u)\bigr)\bar{d}_{V_{i+1}}(u)-2\bar{d}_{S_{i-1}}(u)\bar{d}_{S_{i+1}}(u)\bigr]\\*
=&2t\sum_i\bar{e}_H(S_i,S_{i+1})+\sum_i\sum_{u\in S_i}\bigl[\bigl(\bar{d}_{S_{i-1}}(u)-\bar{d}_{S_{i+1}}(u)\bigr)\bar{d}_{V_{i+1}}(u)-2\bar{d}_{S_{i-1}}(u)\bar{d}_{S_{i+1}}(u)\bigr].\qedhere
\end{align*}
\end{samepage}
\end{proof}

We now combine the bounds on $\tau_1$ and $\tau_2$ in Lemmas~\ref{lem:two-outside} and~\ref{lem:one-outside} to estimate $\tau_1+\tau_2+\tau_3$, the number of triangles meeting $S$.

\begin{lemma}\label{lem:core}
$\tau_1+\tau_2+\tau_3\ge t^3+t\max_j\bar{e}_H(S_j,S\setminus S_j)$ and $\tau_2\le2\big(\tau_1+\tau_2+\tau_3-t^3\big)$.
\end{lemma}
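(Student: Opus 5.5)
The plan is to add the lower bounds of Lemmas~\ref{lem:two-outside} and~\ref{lem:one-outside}, and to combine them with an exact inclusion--exclusion formula for $\tau_3$. Write
\[
P=\sum_i\sum_{u\in S_i}\bar{d}_{S_{i-1}}(u)\bar{d}_{S_{i+1}}(u),\qquad E=\sum_i\bar{e}_H(S_i,S_{i+1}),
\]
and let $N_3$ be the number of triples $(a_0,a_1,a_2)\in S_0\times S_1\times S_2$ that span no edge at all.

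First I add the two lemmas. The terms involving $\bar{d}_{V_{i+1}}(u)$ occur with opposite signs, namely $(\bar{d}_{S_{i+1}}-\bar{d}_{S_{i-1}})\bar{d}_{V_{i+1}}$ and $(\bar{d}_{S_{i-1}}-\bar{d}_{S_{i+1}})\bar{d}_{V_{i+1}}$, so they cancel. The quadratic terms contribute $P-2P$. This gives
\[
\tau_1+\tau_2\ge 2tE-P.
\]

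Second, I compute $\tau_3$ exactly by inclusion--exclusion over the $t^3$ triples in $S_0\times S_1\times S_2$. Every pair of nonedges inside one triple shares a vertex $u$, so the number of (triple, pair of nonedges) incidences is exactly $P$. A triple with $k=1,2,3$ nonedges is then counted $-1$, $-2+1$ and $-3+3-1$ times respectively, that is, $-1$ in every case. Hence
\[
\tau_3=t^3-tE+P-N_3.
\]

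Adding the two displays gives $\tau_1+\tau_2+\tau_3\ge t^3+tE-N_3$. Fix $j$. Then $\bar{e}_H(S_j,S\setminus S_j)$ equals $E-\bar{e}_H(S_{j+1},S_{j+2})$, so it suffices to show $N_3\le t\,\bar{e}_H(S_{j+1},S_{j+2})$. Every edgeless triple contains a nonedge between $S_{j+1}$ and $S_{j+2}$. Each such nonedge extends to at most $|S_j|=t$ triples. This proves the first inequality for every $j$, and hence with $\max_j$.

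For the second inequality, I rewrite it as $2\tau_1+\tau_2+2\tau_3\ge 2t^3$. I use $\tau_1\ge0$, the bound $\tau_1+\tau_2\ge 2tE-P$, and twice the exact formula for $\tau_3$. This yields
\[
2\tau_1+\tau_2+2\tau_3\ge 2t^3+P-2N_3.
\]
Each edgeless triple contributes $3$ to $P$, one for each of its vertices, so $P\ge 3N_3$. The right-hand side is therefore at least $2t^3+N_3\ge 2t^3$.

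The only step needing care is the exact inclusion--exclusion identity for $\tau_3$, in particular checking that $P$ counts pairs of nonedges within triples with the correct multiplicity. The rest is bookkeeping once the $\bar{d}_{V_{i+1}}$ terms are seen to cancel.
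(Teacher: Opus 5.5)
Your proof is correct and follows essentially the paper's route. Like the paper, you add Lemmas~\ref{lem:two-outside} and~\ref{lem:one-outside} so that the $\bar{d}_{V_{i+1}}$ terms cancel, and you then express everything through counts of nonedge patterns in $S_0\times S_1\times S_2$: your identity $\tau_3=t^3-tE+P-N_3$ is the paper's three identities $c_1+c_2+c_3=t^3-\tau_3$, $tE=c_1+2c_2+3c_3$, $P=c_2+3c_3$ (with $N_3=c_3$) combined into one, and your bound $N_3\le t\,\bar{e}_H(S_{j+1},S_{j+2})$ is equivalent to the paper's inequality $t\max_j\bar{e}_H(S_j,S\setminus S_j)\le c_1+2c_2+2c_3$.
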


\begin{proof}
For each $r\in\{1,2,3\}$, let $c_r$ denote the number of triples in $S_0\times S_1\times S_2$ with exactly $r$ missing edges. Then
\begin{equation}\label{eq:nontriangle-triples}
c_1+c_2+c_3=t^3-\tau_3.
\end{equation}

We firstly double count pairs $(\{x,y\},z)$, where $xy\notin E(H)$ and $x,y,z$ lie in distinct sets among $S_0,S_1,S_2$. For each such $\{x,y\}$, there are $t$ choices for $z$. On the other hand, each triple counted by $c_r$ determines exactly $r$ such pairs. Therefore
\[
t\sum_i\bar{e}_H(S_i,S_{i+1})=c_1+2c_2+3c_3.
\]

We next double-count pairs $(\{x,y\},z)$, where $xz,yz\notin E(H)$ and $x,y,z$ lie in distinct sets among $S_0,S_1,S_2$. For each $z\in S_i$, there are $\bar{d}_{S_{i-1}}(z)\bar{d}_{S_{i+1}}(z)$ choices for $\{x,y\}$. On the other hand, each triple counted by $c_1,c_2$ or $c_3$ determines zero, one or three such pairs, respectively. Therefore
\[
\sum_i\sum_{z\in S_i}\bar{d}_{S_{i-1}}(z)\bar{d}_{S_{i+1}}(z)=c_2+3c_3.
\]

When we add the bounds in Lemmas~\ref{lem:two-outside} and~\ref{lem:one-outside}, the terms involving $\bar{d}_{V_{i+1}}(u)$ cancel. Including the $\tau_3$ triangles contained in $S$ and using Equality~\eqref{eq:nontriangle-triples}, we obtain
\begin{equation}\label{eq:mixed}
\begin{aligned}
\tau_1+\tau_2+\tau_3\ge & \tau_3+2t\sum_i\bar{e}_H(S_i,S_{i+1})-\sum_i\sum_{z\in S_i}\bar{d}_{S_{i-1}}(z)\bar{d}_{S_{i+1}}(z)\\
= & \tau_3+2(c_1+2c_2+3c_3)-(c_2+3c_3)\\
= & 2t^3-\tau_3+c_2+c_3 \\
\ge & 2t^3-\tau_3.
\end{aligned}
\end{equation}

Let $i\in\{0,1,2\}$. We now double count pairs $(\{x,y\},z)$, where $xy\notin E(H)$, $\{x,y\}\cap S_i\ne\varnothing$, and $x,y,z$ lie in distinct sets among $S_0,S_1,S_2$. For each such $\{x,y\}$, there are $t$ choices for $z$. On the other hand, a triple counted by $c_1$ determines at most one such pair, and every triple determines at most two, since only two choices of $\{x,y\}$ meet $S_i$. Taking the maximum over $i$, we obtain
\begin{equation}\label{eq:restricted-nonedges}
t\max_j\bar{e}_H(S_j,S\setminus S_j)\le c_1+2c_2+2c_3.
\end{equation}
By the second line of Inequality~\eqref{eq:mixed}, together with Inequality~\eqref{eq:restricted-nonedges} and Equality~\eqref{eq:nontriangle-triples}, we also have
\[
\tau_1+\tau_2+\tau_3\ge\tau_3+2c_1+3c_2+3c_3=t^3+c_1+2c_2+2c_3\ge t^3+t\max_j\bar{e}_H(S_j,S\setminus S_j).
\]
This proves the first bound.

Finally, by Inequality~\eqref{eq:mixed}, $\tau_3\ge2t^3-\big(\tau_1+\tau_2+\tau_3\big)$, so
\[
\tau_2\le\tau_1+\tau_2\le2\big(\tau_1+\tau_2+\tau_3-t^3\big).
\]
This proves the stated upper bound.
\end{proof}

\subsection{Deleting additional vertices}\label{subsec:deletions}
Deleting $S$ may remove too many neighbors from some remaining vertices for induction to apply. We use the triangles counted by $\tau_2$ to choose additional vertices to delete, then verify that the remaining graph satisfies the conditions needed for induction.

Let $\eta$ be a real number with $0<\eta<1$. For each $i\in\{0,1,2\}$, let 
\begin{equation}\label{eq:opposite-density}
\hat{\rho}_i=\frac{e_H(S_{i-1},S_{i+1})}{t^2}
\end{equation}
 denote the edge density between the two sets other than $S_i$. For each $i\in\{0,1,2\}$ and every real number $r\in[0,1-\eta]$, define
\[
D_i(r)=\{u\in V_i\setminus S_i\mid d_S(u)>(2-\eta-r)t\}.
\]
The function $|D_i(r)|$ is a nondecreasing step function of $r$, as illustrated in Fig.~\ref{fig:deletion-integral}.

\begin{figure}[htbp]
\centering
\begin{tikzpicture}[x=1cm,y=0.34cm,font=\small]
  \fill[gray!13] (0.9,0) rectangle (7.6,1);
  \fill[gray!13] (2.6,1) rectangle (7.6,4);
  \fill[gray!13] (4.3,4) rectangle (7.6,6);
  \fill[gray!13] (5.8,6) rectangle (7.6,7);
  \fill[cyan!30] (2.6,2) rectangle (7.6,3);
  \draw[gray!55,line width=0.4pt] (7.6,0)--(7.6,7);
  \draw[gray!40,line width=0.4pt] (2.6,1)--(7.6,1)
    (4.3,4)--(7.6,4) (5.8,6)--(7.6,6);
  \draw[->,line width=0.6pt] (0,0)--(8.35,0) node[right] {$r$};
  \draw[->,line width=0.6pt] (0,0)--(0,7.9) node[above] {$|D_i(r)|$};
  \node[below left] at (0,0) {$0$};
  \draw[gray!65,dashed,line width=0.45pt] (2.6,0)--(2.6,4);
  \draw[line width=0.8pt] (0,0)--(0.9,0) (0.9,1)--(2.6,1)
    (2.6,4)--(4.3,4) (4.3,6)--(5.8,6) (5.8,7)--(7.6,7);
  \foreach \x/\y in {0.9/0,2.6/1,4.3/4,5.8/6,7.6/7}
    \fill (\x,\y) circle[radius=1.2pt];
  \foreach \x/\y in {0.9/1,2.6/4,4.3/6,5.8/7}
    \draw[fill=white,line width=0.6pt] (\x,\y) circle[radius=1.2pt];
  \node at (5.1,2.5) {$u$};
  \draw[<->,line width=0.5pt] (7.92,2)--(7.92,3) node[midway,right] {$1$};
  \draw[line width=0.5pt] (2.6,0)--(2.6,-0.18) (7.6,0)--(7.6,-0.18);
  \node[below=3pt] at (2.6,0) {$2-\eta-d_S(u)/t$};
  \node[below=3pt] at (7.6,0) {$\hat{\rho}_i-\eta$};
\end{tikzpicture}
\caption{Illustration of the function $|D_i(r)|$.}
\label{fig:deletion-integral}
\end{figure}

The following lemma provides a suitable value for each part.

\begin{lemma}\label{lem:deletion-thresholds}
Let $\eta$ be a real number with $0<\eta<1$. There exist real numbers $r_0,r_1,r_2$ such that, for each $i\in\{0,1,2\}$, $r_i\in[0,\hat{\rho}_i-\eta]$ and $|D_i(r_i)|<r_{i+1}t$, provided that
\begin{equation}\label{eq:deletion-condition}
\frac{\tau_2}{t^3}<(1-\eta)\left(1-\eta-\frac{\max_j\bar{e}_H(S_j,S\setminus S_j)}{t^2}\right).
\end{equation}
\end{lemma}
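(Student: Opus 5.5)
The plan has two parts. First, bound the areas under the step functions $|D_i(r)|$ by $\tau_2$. Second, use monotonicity of these functions to find the thresholds $r_i$.

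\textbf{Step 1: an integral bound.} Fix $i$ and a vertex $u\in V_i\setminus S_i$. Write $a=d_{S_{i-1}}(u)$ and $b=d_{S_{i+1}}(u)$, so $d_S(u)=a+b$. Every edge between $N_H(u)\cap S_{i-1}$ and $N_H(u)\cap S_{i+1}$ gives a triangle through $u$ with exactly two vertices in $S$. At most $(t-a)t+(t-b)t$ edges of $H[S_{i-1},S_{i+1}]$ miss these neighbourhoods, so $u$ lies in at least
\[
e_H(S_{i-1},S_{i+1})-(2t-d_S(u))t=\bigl(d_S(u)-(2-\hat\rho_i)t\bigr)t
\]
such triangles. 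Each triangle counted by $\tau_2$ has exactly one vertex outside $S$, so summing over $i$ and $u$ gives
\[
\tau_2\ge t\sum_i\sum_{u\in V_i\setminus S_i}\bigl(d_S(u)-(2-\hat\rho_i)t\bigr)^+ .
\]
Now compute $\int_0^{\hat\rho_i-\eta}|D_i(r)|\,dr$ as in Fig.~\ref{fig:deletion-integral}. Each $u$ contributes the length of the set of $r\in[0,\hat\rho_i-\eta]$ with $r>2-\eta-d_S(u)/t$. This length is at most $(d_S(u)/t-(2-\hat\rho_i))^+$. Therefore
\[
\sum_i\int_0^{L_i}|D_i(r)|\,dr\le\frac{\tau_2}{t^2},\qquad\text{where } L_i=\hat\rho_i-\eta .
\]

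\textbf{Step 2: the intervals are long.} Put $\mu=\max_j\bar e_H(S_j,S\setminus S_j)/t^2$. Since $\bar e_H(S_{i-1},S_{i+1})\le\bar e_H(S_{i-1},S\setminus S_{i-1})$, we get $\hat\rho_i\ge1-\mu$. Hence $L_i\ge c:=1-\eta-\mu$. Condition~\eqref{eq:deletion-condition} forces $c>0$, so each interval $[0,L_i]$ is nonempty. Moreover $L_i\le1-\eta$, so $D_i$ is defined on all of $[0,L_i]$. Write $f_i(r)=|D_i(r)|/t$; each $f_i$ is nondecreasing.

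\textbf{Step 3: choosing the thresholds.} Suppose no admissible triple $(r_0,r_1,r_2)$ exists. The goal is then to show $\sum_i\int_0^{L_i}f_i\ge(1-\eta)c$. Combined with Step 1, this contradicts~\eqref{eq:deletion-condition}. The basic tool is a rectangle estimate from monotonicity: if $f_i(r_i)\ge r_{i+1}$, then
\[
\int_{r_i}^{L_i}f_i\ge(L_i-r_i)\,r_{i+1}.
\]
Applying this only to a constant triple $r_0=r_1=r_2$ gives roughly $c^2/4$, which is too weak.

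The choice of thresholds must therefore be adaptive. First try $r_i=L_i$ for all $i$. If some inequality fails, lower the thresholds cyclically: set $r_i$ to the largest value with $f_i(r_i)<r_{i+1}$, or to $0$ if there is none, and iterate around the cycle. The iteration decreases monotonically and hence converges. If it terminates with every $f_i(r_i)<r_{i+1}$, we are done. Otherwise some $r_i$ drops to $0$ while $f_i(0)\ge r_{i+1}$. One then collects the rectangles $[r_j,r_j^{\mathrm{old}}]\times[\,\cdot\,,f_j]$ swept out at each lowering step. These are disjoint pieces of the three areas.

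\textbf{Main obstacle.} I expect the hard step to be the bookkeeping that shows these rectangles have total area at least $(1-\eta)c$, rather than only about $c^2$. The factor $1-\eta$ presumably comes from a term $f_i(0)\ge r_{i+1}$ with $r_{i+1}$ of order $L_{i+1}$, multiplied against the full length $L_i\ge c$. It may also use that the integrand bound in Step 1 is slack whenever $d_S(u)/t>2-\eta$.
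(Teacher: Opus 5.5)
\textbf{There is a genuine gap in Step 3.} Your Step 1 is exactly the paper's bound $\sum_i\int_0^{L_i}|D_i(r)|\,dr\le\tau_2/t^2$, and Step 2 is correct. Step 3, however, is the real content of the lemma, and you leave it as an unresolved ``main obstacle''. As you have set it up, it cannot be resolved.

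After Step 2 you keep only $c\le L_i\le 1-\eta$ and the monotonicity of the $f_i$. From this information alone, the target $\sum_i\int_0^{L_i}f_i\ge(1-\eta)c$ is false. Take $L_0=L_1=L_2=c$, $f_0\equiv c$ and $f_1=f_2\equiv 0$. Then $f_0(r_0)\ge r_1$ for all $r_0,r_1\in[0,c]$, so no admissible triple exists. Yet the total area is $c^2$, which is $<(1-\eta)c$ whenever $\mu>0$.

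More generally, set $f_i\equiv L_{i+1}$ for a single $i$ and $f_j\equiv 0$ otherwise. This shows that the strongest conclusion available from the absence of an admissible triple is $\sum_i\int_0^{L_i}f_i\ge\min_{j\ne k}L_jL_k$. So no bookkeeping of the rectangles swept out by your cyclic lowering can produce the factor $1-\eta$. Your guess that it comes from $f_i(0)\ge r_{i+1}$ multiplied against $L_i\ge c$ would need $L_{i+1}$ close to $1-\eta$, and nothing in your argument guarantees that.

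\textbf{The missing idea is that the three densities are correlated.} Since
\[
e_H(S_i,S_{i+1})+e_H(S_i,S_{i-1})=2t^2-\bar e_H(S_i,S\setminus S_i),
\]
one has $\hat\rho_{i-1}+\hat\rho_{i+1}\ge 2-\mu$ for every $i$. Combining this with $\hat\rho_j\le 1$ and the identity
\[
(\hat\rho_{i-1}-\eta)(\hat\rho_{i+1}-\eta)=(1-\eta)(\hat\rho_{i-1}+\hat\rho_{i+1}-1-\eta)+(1-\hat\rho_{i-1})(1-\hat\rho_{i+1})
\]
gives $L_jL_k\ge(1-\eta)c$ for every pair $j\ne k$. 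Bounding each $\hat\rho_i\ge 1-\mu$ separately, as you do in Step 2, throws exactly this away.

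\textbf{How the paper finishes.} Once this is available, the paper replaces your iteration with a volume argument. In the box $B=\prod_i[0,L_i]$, the bad set $B_i=\{|D_i(r_i)|\ge r_{i+1}t\}$ constrains only $(r_i,r_{i+1})$, so $\operatorname{vol}(B_i)\le L_{i-1}\int_0^{L_i}f_i$. Relabel so that $L_0$ is largest. Using Step 1, $\operatorname{vol}(\bigcup_iB_i)\le L_0\tau_2/t^3$. Since $L_1L_2\ge(1-\eta)c>\tau_2/t^3$, this is $<L_0L_1L_2=\operatorname{vol}(B)$, so a good triple exists.
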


\begin{proof}
We first bound the integrals of $|D_i(r)|$, then use a volume estimate to choose $r_0,r_1,r_2$. For each $i\in\{0,1,2\}$, Inequality~\eqref{eq:deletion-condition} implies $\eta<1-\max_j\bar{e}_H(S_j,S\setminus S_j)/t^2\le\hat{\rho}_i$, so the interval $[0,\hat{\rho}_i-\eta]$ has positive length. We first prove that
\begin{equation}\label{eq:deletion-integral}
\sum_i\int_0^{\hat{\rho}_i-\eta}|D_i(r)|\,dr\le\frac{\tau_2}{t^2}.
\end{equation}
Let $i\in\{0,1,2\}$ and $u\in V_i\setminus S_i$. By definition, $u\in D_i(r)$ exactly when $r>2-\eta-d_S(u)/t$. If $d_S(u)\le(2-\hat{\rho}_i)t$, then $u$ belongs to none of the sets $D_i(r)$ in the stated range. Otherwise, the values of $r$ for which $u\in D_i(r)$ form an interval of length at most $d_S(u)/t-2+\hat{\rho}_i$. Among the edges between $S_{i-1}$ and $S_{i+1}$, at most $t(2t-d_S(u))$ have an endpoint not adjacent to $u$, since each of the $2t-d_S(u)$ nonneighbors is incident with at most $t$ such edges. Every other edge forms a triangle with $u$. By Equality~\eqref{eq:opposite-density}, the number of these triangles is at least
\[
e_H(S_{i-1},S_{i+1})-t(2t-d_S(u))=t^2\left(\frac{d_S(u)}{t}-2+\hat{\rho}_i\right).
\]
Thus the number of triangles with unique vertex $u$ outside $S$ is at least $t^2$ times the length of the interval for which $u\in D_i(r)$. The integral of $|D_i(r)|$ is the sum of these lengths over $u\in V_i\setminus S_i$ (see Fig.~\ref{fig:deletion-integral}). Since every triangle counted by $\tau_2$ has a unique vertex outside $S$, summing over the three parts proves Inequality~\eqref{eq:deletion-integral}.

We write $\operatorname{vol}(U)$ for the volume of a region $U\subseteq\mathbb{R}^3$. Let $B$ be the box $\prod_{i=0}^2[0,\hat{\rho}_i-\eta]$. For each $i\in\{0,1,2\}$, let $B_i$ be the set of triples $(r_0,r_1,r_2)\in B$ satisfying $|D_i(r_i)|\ge r_{i+1}t$. We will show that $\operatorname{vol}(\bigcup_{i=0}^2 B_i)<\operatorname{vol}(B)$.

For each $i\in\{0,1,2\}$,
\[
\begin{aligned}
\operatorname{vol}(B_i)
=&\int_0^{\hat{\rho}_i-\eta}\int_0^{\min\{\hat{\rho}_{i+1}-\eta,\,|D_i(r_i)|/t\}}\int_0^{\hat{\rho}_{i-1}-\eta}1\,dr_{i-1}\,dr_{i+1}\,dr_i\\
=&(\hat{\rho}_{i-1}-\eta)\int_0^{\hat{\rho}_i-\eta}\min\left\{\hat{\rho}_{i+1}-\eta,\frac{|D_i(r)|}{t}\right\}\,dr\\
\le&\frac{\hat{\rho}_{i-1}-\eta}{t}\int_0^{\hat{\rho}_i-\eta}|D_i(r)|\,dr.
\end{aligned}
\]
By cyclically relabelling the indices, we may assume that $\hat{\rho}_0=\max_i\hat{\rho}_i$. By Inequality~\eqref{eq:deletion-integral},
\begin{equation}\label{eq:bad-region-volume} 
\operatorname{vol}\left(\bigcup_{i=0}^2 B_i\right)
\le \sum_i\operatorname{vol}(B_i) 
\le \frac{\hat{\rho}_0-\eta}{t}\sum_i\int_0^{\hat{\rho}_i-\eta}|D_i(r)|\,dr 
\le (\hat{\rho}_0-\eta)\frac{\tau_2}{t^3}. 
\end{equation}
Since $\hat{\rho}_1,\hat{\rho}_2\le1$ and $1-\eta>0$, we have
\[
\begin{aligned}
(\hat{\rho}_1-\eta)(\hat{\rho}_2-\eta)
=&(1-\eta)(\hat{\rho}_1+\hat{\rho}_2-1-\eta)+(1-\hat{\rho}_1)(1-\hat{\rho}_2)\\
\ge&(1-\eta)(\hat{\rho}_1+\hat{\rho}_2-1-\eta)\\
=&(1-\eta)\left(1-\eta-\frac{\bar{e}_H(S_0,S\setminus S_0)}{t^2}\right)\\
\ge&(1-\eta)\left(1-\eta-\frac{\max_j\bar{e}_H(S_j,S\setminus S_j)}{t^2}\right) >\frac{\tau_2}{t^3},
\end{aligned}
\]
where the second equality follows from Equality~\eqref{eq:opposite-density} and the last inequality follows from Inequality~\eqref{eq:deletion-condition}.
By Inequality~\eqref{eq:bad-region-volume}, $\operatorname{vol}(\bigcup_{i=0}^2 B_i)<\prod_{i=0}^2(\hat{\rho}_i-\eta)=\operatorname{vol}(B)$. Hence we can choose $(r_0,r_1,r_2)\in B\setminus\bigcup_{i=0}^2 B_i$. For each $i\in\{0,1,2\}$, we have $|D_i(r_i)|<r_{i+1}t$, as required.
\end{proof}

We now verify that the remaining graph satisfies the conditions needed for induction.

\begin{lemma}\label{lem:remaining-graph}
Let $\eta$ be a real number with $0<\eta<1$, and let $r_0,r_1,r_2\in[0,1-\eta]$ be fixed. For each $i\in\{0,1,2\}$, let $R_i=V_i\setminus\bigl(S_i\cup D_i(r_i)\bigr)$, and suppose that $|D_i(r_i)|<r_{i+1}t$. Let $R=H[R_0\cup R_1\cup R_2]$. Then $R_0,R_1,R_2$ are nonempty and form an $\eta t$-unbalanced triple. Moreover, for each $i\in\{0,1,2\}$ and every $u\in R_i$, $d_R(u)>|R_{i+1}|+\eta t$.
\end{lemma}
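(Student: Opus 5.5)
The plan is to verify the three conclusions directly from the definitions, with the hypothesis $|D_i(r_i)|<r_{i+1}t$ supplying exactly the slack needed. Throughout, write $D_i=D_i(r_i)$, so that $R_i=V_i\setminus(S_i\cup D_i)$. Since $D_i\subseteq V_i\setminus S_i$ and $|S_i|=t$, we have
\[
|R_i|=|V_i|-t-|D_i|,\qquad 0\le|D_i|<r_{i+1}t\le(1-\eta)t .
\]
I would prove the degree bound first, then the imbalance, and deduce nonemptiness last, since it follows from the other two.

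\emph{Degree bound.} Fix $u\in R_i$. By hypothesis on $H$, $d_H(u)\ge|V_{i+1}|+t$. Passing to $R$, the vertex $u$ loses three kinds of neighbors:
\begin{itemize}
\item those in $S$, of which there are $d_S(u)\le(2-\eta-r_i)t$, because $u\in V_i\setminus S_i$ but $u\notin D_i(r_i)$;
\item those in $D_{i+1}$, at most $|D_{i+1}|$;
\item those in $D_{i-1}$, at most $|D_{i-1}|$.
\end{itemize}
Hence
\[
d_R(u)\ge|V_{i+1}|+t-(2-\eta-r_i)t-|D_{i+1}|-|D_{i-1}|.
\]
Substituting $|V_{i+1}|=|R_{i+1}|+t+|D_{i+1}|$, the $|D_{i+1}|$ terms cancel, which is the key point. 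This leaves
\[
d_R(u)\ge|R_{i+1}|+\eta t+r_it-|D_{i-1}| .
\]
The hypothesis $|D_{i-1}(r_{i-1})|<r_it$, applied with index $i-1$, now gives $d_R(u)>|R_{i+1}|+\eta t$.

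\emph{Imbalance.} Choose $a,b$ with $|V_a|-|V_b|\ge t$, which is possible since $(V_0,V_1,V_2)$ is $t$-unbalanced. Then
\[
|R_a|-|R_b|=|V_a|-|V_b|-|D_a|+|D_b|\ge t-|D_a|>t-(1-\eta)t=\eta t,
\]
so $(R_0,R_1,R_2)$ is $\eta t$-unbalanced. In particular $|R_a|>0$, so at least one $R_i$ is nonempty.

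\emph{Nonemptiness.} Suppose $R_{i}\neq\varnothing$. For $u\in R_{i}$, the degree bound gives $d_R(u)>|R_{i+1}|+\eta t$. Since $d_{R_{i+1}}(u)\le|R_{i+1}|$, the vertex $u$ has more than $\eta t>0$ neighbors in $R_{i-1}$, so $R_{i-1}\ne\varnothing$. Iterating this backwards around the cycle shows that all three parts are nonempty.

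I do not expect a genuine obstacle here. The only delicate point is the bookkeeping in the degree bound: one must subtract at most $|D_{i+1}|$ for the lost neighbors in $V_{i+1}$ so that it cancels against $|V_{i+1}|=|R_{i+1}|+t+|D_{i+1}|$. The cyclic indexing of the hypothesis $|D_{i-1}|<r_it$ then matches the $r_it$ gained from $u\notin D_i(r_i)$ exactly.
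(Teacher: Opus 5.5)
Your proposal is correct and follows the paper's proof essentially step for step. You use the same count $|R_i|=|V_i|-t-|D_i(r_i)|$, the same degree computation in which the $|D_{i+1}(r_{i+1})|$ terms cancel and $|D_{i-1}(r_{i-1})|<r_it$ absorbs the remainder, the same imbalance estimate $t-|D_a(r_a)|>\eta t$, and the same backward propagation of nonemptiness. The only differences are cosmetic: you pick a pair $a,b$ with $|V_a|-|V_b|\ge t$ rather than relabelling so that $|V_0|$ is maximal, and you prove the degree bound before the imbalance.
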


\begin{proof}
For each $i\in\{0,1,2\}$, we have $|R_i|=|V_i|-t-|D_i(r_i)|$ and $|D_i(r_i)|<r_{i+1}t\le(1-\eta)t$. By cyclically relabelling the indices, we may assume that $|V_0|=\max_i|V_i|$. Since $(V_0,V_1,V_2)$ is $t$-unbalanced,
\[
|R_0|-\min_i|R_i|\ge |V_0|-\min_i|V_i|-|D_0(r_0)|\ge t-|D_0(r_0)|>\eta t.
\]
Thus $R$ is nonempty and $(R_0,R_1,R_2)$ is $\eta t$-unbalanced.

Let $i\in\{0,1,2\}$ and $u\in R_i$. Since $u\notin D_i(r_i)$, we have $d_S(u)\le(2-\eta-r_i)t$. Besides these neighbors in $S$, the vertex $u$ loses at most $|D_{i-1}(r_{i-1})|+|D_{i+1}(r_{i+1})|$ neighbors in the additional deletions. Since $d_H(u)\ge |V_{i+1}|+t$,
\[
\begin{aligned}
d_R(u)
&\ge |V_{i+1}|+t-(2-\eta-r_i)t-|D_{i-1}(r_{i-1})|-|D_{i+1}(r_{i+1})|\\
&=|R_{i+1}|+(\eta+r_i)t-|D_{i-1}(r_{i-1})|\\
&>|R_{i+1}|+\eta t,
\end{aligned}
\]
where the last inequality uses $|D_{i-1}(r_{i-1})|<r_i t$.

This degree bound implies that every vertex in $R_i$ has a neighbor in $R_{i-1}$. Since $R$ is nonempty, applying this observation twice shows that all three parts of $R$ are nonempty.
\end{proof}

\subsection{Proof of Theorem~\ref{thm:local}}\label{subsec:induction}

In this subsection, we prove Theorem~\ref{thm:local} by induction on $t$.

First suppose that $t=1$. Assume for a contradiction that $\tau(H)\le 1$.
Since the three parts are nonempty and $1$-unbalanced, $H$ has at least four vertices. Thus some vertex belongs to no triangle. Among all such vertices, let $x\in V_i$ be one with the fewest nonneighbors in its next part. Every in-neighbor $y\in V_{i-1}$ of $x$ belongs to a triangle. Otherwise, the choice of $x$ and Inequality~\eqref{eq:local-edge} imply
\[
0 = \tau_H(yx) \ge 1 + \bar{d}_{V_i}(y) - \bar{d}_{V_{i+1}}(x) \ge 1,
\]
a contradiction. Since $\tau(H) \le 1$, at most one vertex of $V_{i-1}$ belongs to a triangle, so $d_{\vec{H}}^-(x) \le 1$. The degree condition implies $d_{\vec{H}}^-(x) \ge 1+\bar{d}_{V_{i+1}}(x)$. Thus $x$ has a unique in-neighbor $y$ and $\bar{d}_{V_{i+1}}(x)=0$. By Inequality~\eqref{eq:local-edge},
\[
0 = \tau_H(yx) \ge 1 + \bar{d}_{V_i}(y) \ge 1,
\]
again a contradiction. Therefore $\tau(H) \ge 2>6/5$, which proves the case $t=1$.

Now let $t \ge 2$, and suppose that the theorem holds for every smaller positive integer. Suppose, to the contrary, that $\tau(H)<6t^3/5$.

Recall that $S=S_0\cup S_1\cup S_2$ has been fixed. For each $i \in \{0,1,2,3\}$, $\tau_i$ denotes the number of triangles of $H$ with exactly $i$ vertices in $S$. Let
\[
\beta = \frac{\tau_0}{t^3},\quad \eta = \frac14 + 2\beta.
\]
By Lemma~\ref{lem:core} and the assumption that $\tau(H) < 6t^3/5$, we have
\begin{equation}\label{eq:small}
0 \le \beta + \frac{\max_j\bar{e}_H(S_j,S\setminus S_j)}{t^2} < \frac15,\quad \frac{\tau_2}{t^3} < \frac25-2\beta.
\end{equation}
Thus $1/4 \le \eta < 13/20 < 2/3$. By Inequalities~\eqref{eq:small},
\[
(1-\eta)\left(1-\eta-\frac{\max_j\bar{e}_H(S_j,S\setminus S_j)}{t^2}\right)
> \left(\frac34-2\beta\right)\left(\frac{11}{20}-\beta\right)
= \frac25 - 2\beta + 2\beta^2 + \frac{3\beta}{20} + \frac1{80} > \frac{\tau_2}{t^3}.
\]
By Lemma~\ref{lem:deletion-thresholds}, we can choose $r_0,r_1,r_2$ such that $0 \le r_i \le \hat{\rho}_i-\eta \le 1-\eta$ and $|D_i(r_i)| < r_{i+1}t$, for each $i \in \{0,1,2\}$. Since $(V_0,V_1,V_2)$ is $t$-unbalanced, we may apply Lemma~\ref{lem:remaining-graph} to obtain the induced graph $R\subseteq H-S$ with nonempty parts $R_0, R_1$ and $R_2$.

Let $t'=\lceil\eta t\rceil$. The degrees and part sizes are integers, so Lemma~\ref{lem:remaining-graph} implies that, for each $i \in \{0,1,2\}$ and every $u\in R_i$, $d_R(u) \ge |R_{i+1}|+t'$, and that $(R_0,R_1,R_2)$ is $t'$-unbalanced. Since $\eta > 0$, we have $t' \ge 1$. For $t \ge 3$, we have $\eta t < 2t/3 \le t-1$, so $t' \le t-1$. For $t=2$, the assumed upper bound on $\tau(H)$ implies $\tau(H) \le 9$, and Lemma~\ref{lem:core} implies $\tau_1+\tau_2+\tau_3 \ge 8$. Thus $\tau_0 \le 1$, $\beta \le 1/8$, and $\eta \le 1/2$, so $t'=1$. Therefore $1 \le t' < t$, and the induction hypothesis applies to $R$ with parameter $t'$.

The arithmetic--geometric mean inequality implies
\[
\eta^3=\left(\frac18+\frac18+2\beta\right)^3 \ge \frac{27\beta}{32}.
\]
Thus $\eta^3 > 5\beta/6$ when $\beta > 0$, and the same inequality holds when $\beta=0$ because $\eta=1/4$. By the induction hypothesis,
\[
\tau(R) \ge \frac65(t')^3 \ge \frac65\eta^3t^3 > \beta t^3=\tau_0.
\]
This contradicts $\tau(R) \le \tau_0$, since $R\subseteq H-S$. Thus our assumption that $\tau(H) < 6t^3/5$ is false, and we have $\tau(H) \ge 6t^3/5$. This completes the proof.

\section*{Acknowledgements}

We thank Professor Jie Ma for bringing this problem to our attention and for his helpful comments.
Chunqiu Fang was supported by the National Natural Science Foundation for Young Scientists of China (Grant No.~12301435). Rongxing Xu was supported by the National Natural Science Foundation for Young Scientists of China (Grant No.~12401472) and the Zhejiang Provincial Natural Science Foundation of China (Grant No.~LQN25A010011).

\section*{Declaration of AI use}

The authors used AI tools to assist with developing ideas, checking the proofs, and improving the language. All AI-assisted calculations and suggestions were independently verified and revised by the authors. The authors take full responsibility for the content of the paper.

\end{document}